\providecommand{\algorithmname}{Algorithm}
\newtheorem{thm}{\protect\theoremname}
\newtheorem{prop}[thm]{\protect\propositionname}
\newtheorem{defn}[thm]{\protect\definitionname}
\newtheorem{rem}[thm]{\protect\remarkname}
\newtheorem{lem}[thm]{\protect\lemmaname}
\newenvironment{proof}[1][\protect\proofname]{\par
\normalfont\topsep6\p@\@plus6\p@\relax
\trivlist
\itemindent\parindent
\item[\hskip\labelsep\scshape #1]\ignorespaces
}{%
\endtrivlist\@endpefalse
}
\providecommand{\proofname}{Proof}
\providecommand{\definitionname}{Definition}
\providecommand{\lemmaname}{Lemma}
\providecommand{\propositionname}{Proposition}
\providecommand{\remarkname}{Remark}
\providecommand{\theoremname}{Theorem}
\newcommand{\A}{\mathcal{A}}
\newcommand{\R}{\mathcal{R}}
\newcommand{\E}{\mathcal{E}}
\newcommand{\M}{\mathcal{M}}
\newcommand{\T}{\mathcal{T}}
\newcommand{\SA}{\mathcal{S}}
\newcommand{\UC}{\mathcal{U}}
\begin{document}

\title{Stochastic Approximation on Riemannian manifolds
}

\author{Suhail M. Shah
}

\institute{Suhail M. Shah\at
              Department of Electrical Engineering, Indian Institute of Technology Bombay, Powai, Mumbai 400076, India \\
              \email{suhailshah2005@gmail.com}\\
              \textbf{Acknowledgement :} The author would like to acknowledge the support of Prof. Vivek S. Borkar in preparing this manuscript, in particular for spending a lot of time in pointing out the errors and providing valuable insight and references.
          }

\date{Received: date / Accepted: date}

\maketitle

\begin{abstract}
The standard theory of stochastic approximation (SA) is extended to the case when the constraint set is a Riemannian manifold. Specifically,  the standard ODE method  for analyzing SA schemes  is extended to iterations constrained to stay on a manifold using a retraction mapping. In addition, for submanifolds of a Euclidean space, a framework is developed for a projected SA scheme with approximate retractions. The framework is also extended to non-differentiable constraint sets.

\keywords{Stochastic Approximation \and Riemannian manifolds \and Retraction Mappings \and ODE Method \and Two time scales \and Differential Inclusions }
\end{abstract}

\section{Introduction}
\label{intro}
In many situations in engineering and other fields, it is of interest to find the roots of some unknown function $h\,:\,\mathbb{R}^n \to \mathbb{R}^n$ to which we have access to only through noisy measurements. Stochastic Approximation, originally introduced by Robbins and Monro in \cite{Robbins}, provides an incremental scheme for doing this. It involves running the following iteration 
$$x_{n+1} = x_n + a_n (h(x_n) +M_{n+1}), $$
where stepsizes $a(n) > 0$ satisfy the assumptions:
$$\sum_na(n) = \infty, \ \sum_na(n)^2 < \infty,$$
and $\{M(n)\}$ is the noise term, usually assumed to be a martingale difference sequence. A generalizaiton of this is when the relevant function is defined on a manifold. There has been considerable interest in optimization and related algorithms on manifolds, particularly on matrix manifolds \cite{Absil}, \cite{Helmke}. But the corresponding development for such algorithms in presence of noisy measurements has been lacking. A notable exception is \cite{Bonn} which analyzed stochastic gradient schemes on Riemannian manifolds.

The contributions of this work are as follows : First and foremost, we extend the Ordinary Differential Equations (ODE) method for analyzing stochastic approximation schemes (\cite{BorkarBook}, \cite{Kushner}, \cite{Ljung}) to Riemannian manifolds. The usual addition operation of the Euclidean Robbins-Monro scheme is modified using a retraction (see Definition 1,2). We consider a much more general model than \cite{Bonn} and furthermore, our approach is quite distinct from that of \textit{ibid.}, which is inspired from \cite{Bot}. Studying the convergence properties using the associated ODE connects the stochastic approximation theory on manifolds with that of ODEs on manifolds. The upside here is that  ODEs on manifolds are very well studied and have a rich theory (see \cite{Hairer}). This opens up the possibility of adapting analytic and computational techniques in the latter domain for stochastic approximation on manifolds in future.

The second contribution is in the context of projected SA for submanifolds embedded in a Euclidean space as well as for general constraint sets which may not have a manifold structure. In many situations the iterates perforce are executed in an ambient space and a suitable correction at each iterate is required to pull it back to the manifold. For example, for an iteration over the unit sphere, the noisy measurement or even the discretization error may put the next iterate off the sphere and one would typically normalize it to bring it back to the sphere. Such issues also arise, e.g., in gradient projection algorithms. We approach projected SA by taking recourse to ideas developed in \cite{Mathkar} which studied an iteration which used a fixed nonlinear map perturbed by a stochastic approximation like expression with decreasing stepsizes, i.e.,
$$x(n+1) = F(x(n)) + a(n)(h(x(n)) + M(n+1)),$$
$F, h, \{M(n)\}$ satisfy suitable technical hypotheses. The main claim was that asymptotically one recovers the limiting behavior of the projected o.d.e.\
$$\dot{x}(t) = \Gamma(h(x(t))),$$
where $\Gamma$ is the Frechet differnetial of the associated projection map to the set of fixed points of $F$. These results were somewhat restrictive in the regularity assumptions imposed on $F$. This lacuna was worked around in \cite{SuhBor} where $F$ was replaced by a suitable time dependent sequence of maps $\{F_n\}$ in a manner that ensured the above under weaker technical conditions, albeit for the special case when the projection is to the intersection of compact convex sets. In the present work, we develop an analogous scheme wherein the iteration sits in an ambient Euclidean space, but the choice of $\{F_n\}$ ensures the correct asymptotic behavior on the manifold. It is worth noting that both \cite{Mathkar} and \cite{SuhBor} consider distributed algorithms where multiple processors coordinate their computations for a common goal. As this is not our concern here, we downplay this aspect of \cite{Mathkar}.

The rest of the paper is organized as follows. In Section 2 we prove the the main convergence result pertaining to SA on manifolds,  where an ODE approximation is established. Section 3 studies retractions in the framework of submanifolds. We conclude the paper in Section 4 with some comments and future directions.

Before proceeding further we recall some basic facts and definitions about Riemannian geometry which we will use throughout the rest of the paper. Throughout the rest of the paper we let $\M$ denote a connected Riemannian manifold. A smooth $n$-dimensional manifold is a pair $(\M,\A)$, where $\M$ is a Haussdorf second countable topological space and $\A$ is a collection of charts $\{\mathcal{U}_\alpha,\psi_{\alpha}\}$ of the set $\M$, i.e., the following holds :
\begin{enumerate}
\item The collection $\{\mathcal{U}_\alpha\}$ of open sets in $\M$ covers $\M$, i.e.,
$$ \cup_{\alpha} \mathcal{U_\alpha} = \M.$$

\item Each $\psi_{\alpha}$ is a bijection of $\mathcal{U}_{\alpha}$ onto an open set of $\mathbb{R}^n$.

\item for any pairs $\alpha, \beta$ with $\mathcal{U}_{\alpha} \cap \mathcal{U}_{\beta} \neq \emptyset  $, the set $\psi_\alpha(\mathcal{U}_{\alpha} \cap \mathcal{U}_{\beta} )$ and $\psi_\beta(\mathcal{U}_{\alpha} \cap \mathcal{U}_{\beta})$ are open sets in $\mathbb{R}^n$ and the change of coordinates
$$ \psi_{\beta} \circ \psi^{-1}_\alpha : \mathbb{R}^n \to \mathbb{R}^n$$
is smooth on its domain $\psi_\alpha(\mathcal{U}_{\alpha} \cap \mathcal{U}_{\beta} )$.
\end{enumerate}
By a Riemannian manifold we mean a manifold whose tangent spaces are endowed with a smoothly varying inner product $\langle \cdot,\cdot \rangle_x$ called the Riemannian metric.
The tangent space at any point $x \in \M$ is denoted by $\T_x \M$ (for definition see \cite{Absil} or \cite{dcarmo}). We recall that a tangent space admits a structure of a vector space and for a Riemannian manifold it is a normed  vector space. The tangent bundle $\T\M$ is defined to be the disjoint union $\cup_{x \in \M}\{x\}\times\T_x\M$. The normal space at the point $x$ denoted by $\mathcal{N_{M}}(x)$ is the set of all vectors orthogonal (w.r.t to $\langle \cdot,\cdot \rangle_x$) to the tangent space at $x$. Using the norm, one can also define the arc length of a curve $\gamma : [a,b] \to \M$ as
$$L(\gamma)= \int_a^b \sqrt{\langle \dot{\gamma}(t),\dot{\gamma}(t) \rangle_{\gamma(t)}} \, dt.$$

\begin{defn}
A geodesic on $\M$ is a curve that locally minimizes the arc
length (equivalently, these are the curves that satisfy $\gamma''\in\mathcal{N_{M}}(\gamma(t))$
for all $t$). The exponential of a tangent vector $u$ at $x$, denoted
by $\text{exp}_x(u)$, is defined to be $\Gamma(1,x,u)$,
where $t\to\Gamma(t,x,u)$ is the geodesic that satisfies
\[
\Gamma(0,x,u)=x\;\text{ and }\;\frac{d}{dt}\Gamma(0,x,u)\rvert_{t=0}=u.
\]
\end{defn}

We let $d(\cdot,\cdot)$ denote the Riemannian distance  between any two points $x,y\in \M$, i.e., 
$$ d \,:\, \mathcal{M} \times \mathcal{M} \to \mathbb{R}\, : \, d(x,y) = \inf_{\Gamma}L(\gamma),$$
where $\Gamma$ is the set of all curves in $\mathcal{M}$ joining $x$ and $y$. We recall that $d(\cdot,\cdot)$ defines a\textit{ metric }on $\M$. By the neighborhood $\mathcal{U}_{x}$  of a point $x$ we mean the normal neighborhood or the geodesic ball centered at $x$. The coordinate chart for this neighborhood is provided by
$$ \psi_x = E^{-1}\circ \text{exp}^{-1}_x : \mathcal{U}_{x} \to \mathbb{R}^n, $$
where $E:\mathbb{R}^n \to \T_x\M$ is the isomorphism mapping a point $x=(x(1),...,x(n))\in \mathbb{R}^n$ to a point in $\T_x\M$ expressed in the  orthonormal basis $\{E^i\}$ for $\T_x\M$, i.e $E(x)=\sum_i x(i)E^i$. The coordinates of the point $x$ under $\psi_x$ are $0$.

\section{Stochastic Approximation on Manifolds}

Suppose we have a smooth vector field $H\,:\,\mathcal{M} \to \mathcal{T}\mathcal{M} $  assigning to each point $x \in \M$ a tangent vector $H(x) \in \T_x \M$.   We want to find the zeroes of $H(\cdot)$ based on its samples corrupted by a martingale difference noise $M_{x}\in\mathcal{T}_{x}\mathcal{M}$. A common scenario where this happens is when we have to minimize a smooth function :
\[
\min_{x\in\M} \big\{ L(x) \doteq \mathbb{E}_{z}Q(x,z)=\int Q(x,z)dP(z)
 \big\}\]
for some function $Q : \mathcal{M}\times \mathcal{Z} \mapsto \mathbb{R},$ where $x\in\mathcal{M}$ is the minimization
parameter and $P$ is a probability measure on a measurable space
$\mathcal{Z}$. The minimization is to be performed based on noisy measurements of a black box that outputs at time $n$ on input $x_n$, the quantity $Q(x_n, \xi_n)$ where $\xi_n$ is an independent sample with law $P$. In this case one sets $H(x) = E\left[Q(x, \xi)\right]$ and $M_{n+1} = Q(x_n, \xi_n) - H(x_n)$.

The most obvious way to do this would be to take the noisy sample
of the vector field and perform a simple iterative gradient descent like scheme. However, such an iteration involving the addition of two points  would not be well defined for manifolds.  A way to remedy this is to use the notion of  a geodesic
and the associated concept of the exponential map.

The natural generalization of the SA scheme to the case of manifolds
using the exponential map would yield\footnote{For ease of notation we drop hereafter the superscript $x_k$ on the noise term $M_{k+1}$ which is understood to belong to the tangent space at $x_k$.} :
\begin{equation}\label{expiter}
x_{k+1}=\text{exp}_{x_k}\big(a_{k}\{H(x_{k})+M^{x_k}_{k+1}\}\big).
\end{equation}
We have to be careful while defining the above update because of the \textit{injectivity radius}. We assume throughout the paper that this quantity is bounded below by some $K>0$. Informally, the injectivity radius at $x\in\M$ is the least distance to the cut locus which is where the $\text{exp}_x(\cdot)$ ceases to be the path of minimizing length (see \cite{dcarmo} for details). But since the time step $a_k \to 0$ (see assumption (A2)), this is not a problem as long as the stepsizes are sufficiently small.

The downside of the above scheme is that computing the exponential updates requires solving an ordinary
differential equation which defines the geodesic and is usually
computationally expensive. Even for the simple case of a spherical constraint $S^{n-1}$, the geodesic $t\to x(t)$ expressed as a function of $x_0 \in S^{n-1}$ and $\dot{x}_0 \in \T_{x_0}S^{n-1}$ is given by (Example 5.4.1, \cite{Absil}) :
$$x(t)=x_0\text{cos}(\|\dot{x}_0t\|) +\dot{x}_0\frac{1}{\| \dot{x}_0\|}\text{sin}(\|\dot{x}_0\|t).$$
An alternative is provided by approximating
the geodesics using the concept of retractions defined next.
\begin{defn}
A retraction on $\mathcal{M}$ is a smooth mapping $\mathcal{R}:\mathcal{TM}\to\mathcal{M}$, where $\mathcal{TM}$ is the tangent bundle,
with the following properties :

i) $\mathcal{R}_{x}(0_{x})=x$, where $\mathcal{R}_{x}$ is the restriction of the retraction
to $\mathcal{T}_{x}\mathcal{M}$ and $0_{x}$ denotes the zero element
of $\mathcal{T}_{x}\mathcal{M}$.

ii) {[}Local rigidity{]} With the canonical identification $\mathcal{T}_{0_{x}}\mathcal{T}_{x}\mathcal{M}\simeq\mathcal{T}_{x}\mathcal{M}$,
$\mathcal{R}_{x}$ satisfies
\[
D\mathcal{R}_{x}(0_{x})=\text{id}_{\mathcal{T}_{x}\mathcal{M}},
\]

where $\text{id}_{\mathcal{T}_{x}\mathcal{M}}$ denotes the identity
mapping on $\mathcal{T}_{x}\mathcal{M}$.
\end{defn}
The update (\ref{expiter}) then becomes:
\begin{equation}\label{ret}
x_{k+1}=\mathcal{R}_{x_{k}}\big(a_{k}\{H(x_{k})+M_{k+1}\}\big).
\end{equation}
Again, in the above iteration it is assumed that $M_{k+1}$ belongs to $\mathcal{T}_{x_k} \mathcal{M}$. To compare for the example of the spherical constraint, a possible retraction would be simply to normalise (i.e., divide by the norm) $x_n$ at each step.

\begin{rem}
An important thing to note here is that we do not prescribe the retraction mapping in (\ref{ret}). It is not unique and depending upon the manifold there may be more than one possible choice. In fact the exponential map itself is a retraction (Proposition 5.4.1, \cite{Absil}). Keeping the latter fact in mind, we perform the convergence analysis only for (\ref{ret}). All the results presented also hold for (\ref{expiter}). The specific retraction will usually be chosen based on computational ease.
\end{rem}
We give some examples (Section 3.2, 3.3 \cite{Malick}) of retractions on some matrix manifolds, which will help  highlight the computational appeal of using general retractions as opposed to the exponential map.\\

\noindent{\textbf{Example 1 }(\textit{Projection on Fixed Rank Matrices and Stiefel
Manifolds) }}: Let
\[
\mathcal{R}_{r}=\{X\in\mathbb{R}^{n\times m}:\,\text{rank}(X)=r\}
\]
be the set of matrices with rank $r$ which is known to be a smooth
manifold of $\mathbb{R}^{n\times m}$. Let the singular value decomposition
of any $X \in \mathbb{R}^{n\times m}$ be given by
\[
X=U\Sigma V^{T}
\]
with $U=[u_{1},....,u_{n}]$ and $V=[v_{1},....,v_{m}]$ being orthogonal matrices
and $\Sigma$ being a diagonal matrix having the singular values
of $X$ on its diagonal  in the non-increasing order $\big(\sigma_{1}(X)\geq\sigma_{2}(X)....\geq\sigma_{\min\{n.m\}}(X)\geq0 \big)$.
With a fixed $\tilde{X} \in \mathcal{R}_r$, for any $X$ such that $\|X-\tilde{X}\|<\sigma_{r}(\tilde{X})/2$,
the projection of $X$ onto $\mathcal{R}_{r}$ exists, is unique
and is given by
\[
P_{\mathcal{R}_{r}}(X)=\sum_{i=1}^{r}\sigma_{i}(X)u_{i}v_{i}^{T}.
\]
The Stiefel manifold is defined for $m\leq n$ by
\[
\mathcal{S}_{n.m}=\{X\in\mathbb{R}^{n\times m}\,:\,X^{T}X=I_{m}\}.
\]
Along the same lines as above, let $\tilde{X}\in\mathcal{S}_{n,m}$; then
for any $X$ such that $\|X-\tilde{X}\|<\sigma_{m}(\tilde{X})/2$,
the projection of $X$ onto $\mathcal{S}_{n,m}$ exists, is unique
and is given by
\[
P_{\mathcal{S}_{n,m}}(X)=\sum_{i=1}^{m}u_iv_{i}^{T}.
\]
In particular, this is a retraction.\\

 Since we will be using the ODE method to analyze the algorithm (\ref{ret}),
we first recall the notion of an ODE on a manifold (\cite{Hairer}, Theorem 5.2, Chapter 4):
\begin{defn}
The dynamics
\begin{equation} \label{manODE}
\dot{x}=H(x)
\end{equation}
 defines a differential equation on the manifold when
$$H(x)\in\mathcal{T}_{x}\mathcal{M}\:\text{ for all }x\in\mathcal{M}$$
is a smooth vector field. The existence and uniqueness theorem for ordinary differential equations guarantees that there exists a unique smooth function $\Phi\,:\,\mathbb{R}\times\M \to \M$ such that

i) $\Phi_\cdot (t)\,:\, \M \to \M$ is a diffeomorphism for each $t \in \mathbb{R}$;

ii)  $\Phi_x(t+s) = \Phi_{\Phi_x(t)}(s) $  ; and

iii) for each $x\in \M,$
$$\frac{d}{dt}\bigg\rvert_{t=0}\Phi_x(t) = H(x).$$
\end{defn}
The proof of the existence of a unique smooth function satisfying the above prooperties is given in Chapter 5, \cite{Arnold}. The notion of convergence on manifolds is defined by generalizing
that for the standard Euclidean space using local charts :
\begin{defn}\label{mancon}
An infinite sequence $\{x_{k}\}_{k\geq0}$ of points of a manifold
$\mathcal{M}$ is said to be convergent to a point $x^{*}\in\mathcal{M}$ if there exists a  chart ($\mathcal{U},\psi$)
of $\mathcal{M}$, and a $K>0$
such that $x^* \in \mathcal{U}$, $x_k \in \mathcal{U}$ for $k \geq K$, and $\{\psi(x_{k})\}_{k\geq K}$ converges to the
point $\psi(x^{*})$.
\end{defn}

We make the following important assumption:\\
\begin{enumerate}

\item[(A0)] The injectivity radius at all points in $\M$ is bounded away from zero by some $r_0 > 0$.\\

\item[(A1)] The map  $H\,:\,\mathcal{M} \to \mathcal{T}\mathcal{M}$ is a smooth vector field.\\

\item[(A2)] Step-sizes $\{a(n)\}$ are positive scalars satisfying :
\begin{equation} \label{summability}
\sum_{n}a_n=\infty\,,\sum_{n}a_n^{2} < \infty.
\end{equation}
\item[(A3)] $\{M_{n}\}$ is a martingale difference sequence with respect
to the increasing $\sigma$-fields
\begin{equation}\label{sigma-field-1}
\mathcal{F}_{n}\doteq\sigma(x_0,M_{m},m \leq n),\,n\geq0
\end{equation}
so that $$\mathbb{E}[M_{n+1}|\mathcal{F}_{n}]=0\,\text{ a.s.}$$ Furthermore, we assume that
\begin{equation}
\sup_n\mathbb{E}[\|M_{n+1}\|^{2}|\mathcal{F}_{n}] < \infty. \label{mgbound}
\end{equation}

\item[(A4)] The iterates of (\ref{ret}) remain in a (possibly sample point dependent) compact subset of $\M$, a.s.

\end{enumerate}

\begin{rem}
Note that the norm $\|.\|$ used in (A3) is derived from the Rimeannian metric $\langle \cdot,\cdot\rangle$. Condition (A4) is a stability condition that needs to be separately verified, see \cite{BorkarBook}, Chapter 3 for some tests for stability in the Euclidean case.
\end{rem}

\subsection{Convergence Analysis}

The analysis presented here builds upon the proof for the Euclidean case (Lemma 1, Theorem 2, Chapter 2, \cite{BorkarBook}). Let $t_{n}=\sum_{m=0}^{n-1}a_{m}$ with
$t_{0}=0$. Let $x(t)$ be a continuous time trajectory evolving on the manifold defined
by setting $x(t_{n})=x_{n}$, where $x_n$ is the iterate produced by (\ref{ret}), and then joined by a \textit{geodesic} on the interval
$[t_{n},t_{n+1}).$ Also let $x^{s}(t),\,t\geq s,$ denote the solution of (\ref{manODE}) starting at the point $x(s)$, i.e. $x^s(s)=x(s)$ , so that $x^{t_n}(t), t \geq t_n,$ is the solution to the
ODE (\ref{manODE}) with $x^{t_n}(t_n)=x(t_n)=x_{n}$.

\begin{thm}\label{manthrm}
Suppose (A0)-(A4) hold. Then for any
$T>0$,
\[
\lim_{s\to\infty}\sup_{t\in[s,s+T]} d(x(t),x^{s}(t)) \to0\, \textrm{ a.s. },
\]
where $d(\cdot,\cdot)$ is the Riemannian distance.
\end{thm}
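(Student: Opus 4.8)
The plan is to mimic the Euclidean ODE-method argument (Lemma 1, Chapter 2 of \cite{BorkarBook}) by transporting the analysis into local coordinates, where the retraction update looks like a perturbed Euler scheme for the ODE \eqref{manODE}. First I would fix a sample point in the a.s.\ event where (A4) holds, so the iterates live in a compact set $\mathcal{K}\subset\M$; by (A0) the injectivity radius is bounded below by $r_0>0$, so the exponential charts $\psi_x=E^{-1}\circ\exp_x^{-1}$ are well defined on geodesic balls of uniform radius, and since $\mathcal{K}$ is compact it is covered by finitely many such charts. Smoothness of $H$, of the retraction $\R$, and of the metric then gives uniform Lipschitz and boundedness constants on $\mathcal{K}$ for all the maps involved, which is what makes the estimates uniform in $n$.

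The core step is to control the one-step discrepancy. I would write the retraction update \eqref{ret} and the exact geodesic/ODE flow in a common local chart and Taylor-expand. The local rigidity property $D\R_x(0_x)=\mathrm{id}$ (Definition~2, ii) is exactly what ensures that $\R_{x_k}(a_k v)=\exp_{x_k}(a_k v)+O(a_k^2)$ uniformly on $\mathcal{K}$, so replacing the exponential map by an arbitrary retraction only introduces an $O(a_k^2)$ error per step; summability of $\sum_k a_k^2$ from (A2) then makes the total contribution of these retraction errors vanish asymptotically. Next I would split the deviation between the interpolated trajectory $x(t)$ and the ODE solution $x^s(t)$ into three accumulated parts: the discretization (Euler) error, which is $O(\sum a_k^2)$; the retraction-versus-exponential error, also $O(\sum a_k^2)$; and the martingale noise term $\sum_k a_k M_{k+1}$. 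For the last, I would define the martingale $\zeta_n=\sum_{m=0}^{n-1}a_m M_{m+1}$, use (A3) together with $\sum a_k^2<\infty$ to show $\{\zeta_n\}$ is $L^2$-bounded, hence convergent a.s.\ by the martingale convergence theorem, so its tail oscillations over any window $[s,s+T]$ go to zero.

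With these ingredients I would assemble a discrete Gronwall argument in the chart: the coordinate difference $\|\psi(x(t))-\psi(x^s(t))\|$ over $[s,s+T]$ is bounded by the sum of the three error terms, amplified by a factor $e^{LT}$ coming from the Lipschitz constant $L$ of the coordinate vector field, and all three error terms tend to zero as $s\to\infty$ uniformly on $[s,s+T]$. Finally I would translate the resulting coordinate convergence back to the Riemannian distance: on the compact set $\mathcal{K}$ the chart map $\psi$ and the distance $d(\cdot,\cdot)$ are bi-Lipschitz equivalent, so smallness in coordinates is equivalent to smallness in $d$, giving $\sup_{t\in[s,s+T]}d(x(t),x^s(t))\to0$ a.s.

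I expect the main obstacle to be the bookkeeping required to make every estimate genuinely uniform while moving between charts, since a trajectory over a window of length $T$ may cross several coordinate patches. One must ensure the Lipschitz/second-derivative bounds on $H$, on $\R$, and on the transition maps $\psi_\beta\circ\psi_\alpha^{-1}$ are taken over the whole compact set and do not degrade at chart overlaps; controlling the accumulation of the $O(a_k^2)$ retraction error across patch boundaries, and verifying that the Euclidean Gronwall estimate patches together consistently into a global bound on the manifold, is the delicate part of the argument.
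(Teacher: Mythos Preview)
Your outline is essentially the paper's approach: pass to local coordinates, Taylor-expand the retraction using $D\R_x(0_x)=\mathrm{id}$ to reduce \eqref{ret} to a perturbed Euler scheme, control the martingale term via square-summability and the martingale convergence theorem, and close with a discrete Gronwall bound amplified by $e^{LT}$. The ingredients and their roles are identified correctly.

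The one substantive difference is in how the chart-switching obstacle---which you correctly flag as the delicate point---is actually resolved. You propose to carry uniform bounds on the transition maps $\psi_\beta\circ\psi_\alpha^{-1}$ and ``patch'' the Euclidean Gronwall estimates across overlaps. The paper does \emph{not} do this, and it is not clear your patching would work cleanly: a Gronwall bound run in one chart does not obviously transfer to another chart without introducing an extra multiplicative constant at every switch, and these could compound over $[s,s+T]$. Instead, the paper sidesteps the issue by a reduction-and-bootstrap argument. It first arranges the finite cover so that every iterate $x_n$ lies at Riemannian distance at least $r_0/4$ from the boundary of some chart $\mathcal{U}_n$; this guarantees a \emph{uniform} positive time $\bar T>0$ for which both $x(\cdot)$ and $x^{t_n}(\cdot)$ remain in the single chart $\mathcal{U}_n$ on $[t_n,t_n+\bar T]$. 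The entire coordinate/Gronwall computation is then carried out for $T=\bar T$, never leaving one chart. The extension to arbitrary $T$ is done \emph{afterwards}, purely in the Riemannian distance, by the triangle inequality
\[
\sup_{[s,s+2\bar T]} d(x,x^s)\le \sup_{[s,s+\bar T]} d(x,x^s)+\sup_{[s+\bar T,s+2\bar T]} d(x,x^{s+\bar T})+\sup_{[s+\bar T,s+2\bar T]} d(x^{s+\bar T},x^s),
\]
together with continuous dependence of the flow on initial data. This decouples the chart-level analysis from the long-horizon statement and avoids any accumulation of patching constants; you should replace your proposed transition-map bookkeeping with this reduction.
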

\begin{proof}
We first do the following construction : Cover $\M$ by a countable collection of open geodesic balls of radii $\frac{r_0}{2}$ (i.e., points whose Riemannian distance from a `center' is $< \frac{r_0}{2}$) with the additional properties :
\begin{itemize}

\item [\textbullet]  any point in $\M$ is at a Riemannian distance $< \frac{r_0}{4}$ from the center of at least one such ball,

\item [\textbullet] any compact subset of $\M$ has a finite subcover of such balls.
\end{itemize}
It is easy to see that under (A0), this is always possible. Take a finite subcover from the given collection to cover the compact set in which $x_n$ lie. Let $\{\mathcal{U}_n,\psi_n\}$ denote \textit{a neighborhood} in which the point $x_n$ lies. Under the above construction, the following hold:
\begin{enumerate}
\item Given a pair $(\mathcal{U}, \psi)$ from the chart, without any loss of generality,  both $\psi(\cdot)$ and its differential  $D\psi(\cdot)$ viewed as a map $ D\psi(\cdot): \mathcal{U} \mapsto$ the space of bounded linear operators $\mathcal{T}_{\cdot}\M \to \mathbb{R}^n$ with operator norm, are uniformly bounded on $\UC$. If $\psi(\mathcal{U})$ does not map to a bounded set in $\mathbb{R}^n$ we can take a smaller neighborhood $\mathcal{U'}\subset \mathcal{U}$ with $\overline{\mathcal{U'}} \subset \mathcal{U}$ where $\overline{\mathcal{U'}}$ denotes the closure of $\mathcal{U}'$. The difference $\UC\,/\,\overline{\UC'}$ can again be covered by a finite number of neighborhoods to make sure $\cup_{\alpha}\mathcal{U}_{\alpha} = \M$. The same argument applies to $D\psi(\cdot)$. Note that all of this is possible since we are dealing with geodesic neighborhoods and shrinking such a neighborhoods radius does not violate its diffeomorphic property.  \\

\item For any $x_n$, $d(x_n, y) \geq \frac{r_0}{4} \ \forall \ y \in \partial\mathcal{U}_{n}$, where $\partial\mathcal{U}_{n}$ denotes the boundary of $\mathcal{U}_n$, for at least one such geodesic ball $\mathcal{U}_n$. This property ensures that each $x_n$ lies in the interior of a geodesic ball. Also, this is again without any loss of generality since if this property did not hold true, we would have  an infinite number of $x_n$ with $d(x_n, y) < \frac{r_0}{4} \text{ for some} \ y \in \partial\mathcal{U}_{n}$ and all such $\UC_n$ which contain $x_n$ (for a finite number of such $x_n$ we can just add the geodesic balls centered at them to the chart). But then again, for all such $\UC_n$ which contain these $x_n$ (which are finite in number from compactness), we  can cover the points $y \in \UC_n $ with $d(x_n, y) \geq \frac{r_0}{4}$ with a finite number of geodesic balls of radii $\frac{r_0}{2}$ so that the property is restored. 
\end{enumerate}
The motivation behind this construction will be discussed later. The following notation is used to denote the coordinate expressions :
\[
\hat{x}_{n} =\psi_n(x_{n}),
\]
\[
\hat{H}(\hat{x}_{n}) = D\psi_n(x_{n})[H(x_{n})],
\]
\[
\hat{M}_{n} = D\psi_n(x_{n})[M_n],
\]
where $D\psi_n(x_n) : \mathcal{T}_{x_n}\mathcal{M} \to \mathbb{R}^n$ is the differential of the coordinate mapping $\psi_n$ and is a linear map. Let $\bar{x}(t)$ be a piecewise linear trajectory evolving in $\mathbb{R}^{n}$
defined by setting $\bar{x}(t_{n})=\hat{x}_{n}$ and then doing a
\textit{linear interpolation} on the interval $[t_{n},t_{n+1}).$

Consider the ODE on the manifold expressed in the local
chart $\hat{x}^{t_n}(t)=\psi_n(x^{t_n}(t))$, where $\hat{x}^{t_n}(t)$ denotes the solution to (\ref{newmanODE}) below with $\hat{x}^{t_{n}}(t_{n})=\hat{x}_{n}$, well-defined for $t$ in a sufficiently small interval :
\[
\dot{x}(t)=H(x(t))
\]
\[
\Rightarrow(D\psi(x(t)))\dot{x}(t)=(D\psi(x(t)))H(x(t))
\]
\begin{equation} \label{newmanODE}
\Rightarrow\dot{\hat{x}}(t)=\hat{H}(\hat{x}(t)).
\end{equation}
It is sufficient to show here that $$\lim_{s\to\infty}\sup_{t\in[s,s+T]} \|\bar{x}(t)-\hat{x}^{t_n}(t)\| \to 0,$$ so that we deal with  iterates
on the manifold using local parameterization. This is where Properties (a) and (b) help us. Property (a) helps in making sure that $\hat{x}_n,\,n \geq 0$ remain bounded. We will be comparing trajectories $x(\cdot)$ and $x^{t_n}(\cdot)$ in the interval $[t_n, t_n+T]$  by mapping them to $\bar{x}(\cdot)$ and $\hat{x}^{t_n}(\cdot)$ respectively. So we may have to switch the coordinate charts multiple times while mapping $\{x(t)\,,\,x^{t_n}(t):\,t\in[t_n, t_n+T)$\}. To circumvent this, let us define
$$\bar{T}= \sup_{n\geq0 }\{ t : x(t_n + t ),x^{t_n}( t )  \in \psi_{n}(\mathcal{U}_n) \}. $$
From Property (b)  $d(x_n, y) \geq \frac{r_0}{4} \ \forall \ y \in \partial\mathcal{U}_{n}$, so  $\bar{T}>0$.\footnote{If $\bar{T} = \infty $ (an infinite injectivity radius) or $T < \bar{T}$, all these constructions are  not required.} Note that for $\bar{T}$ thus defined, we do not need to switch coordinate charts while comparing the trajectories $x(\cdot)$ and $x^{t_n}(\cdot)$ in the interval $[t_n, t_n+\bar{T}]$.\\

\noindent{\textit{Claim :}} If the theorem holds for $\bar{T}$, then it does so for any $T\geq\bar{T}$.\\

\noindent{\textit{Proof :}} Let the theorem hold for $T=\bar{T}$, we show that it also holds for $T=2\bar{T}$. We have,
$$\sup_{t\in[s,s+2\bar{T}]} d(x(t),x^{s}(t)) \leq  \sup_{t\in[s,s+\bar{T}]} d(x(t),x^{s}(t)) + \sup_{t\in[s+\bar{T},s+2\bar{T}]} d(x(t),x^{s}(t)).$$
The first term in the above goes to zero in the limit from the assumption. The second term can be handled as follows :
\begin{align*}
\sup_{t\in[s+\bar{T},s+2\bar{T}]} d(x(t),x^{s}(t)) &  \leq \sup_{t\in[s+\bar{T},s+2\bar{T}]} d(x(t),x^{s+\bar{T}}(t)) +\\  & \qquad \qquad \sup_{t\in[s+\bar{T},s+2\bar{T}]} d(x^{s+\bar{T}}(t),x^{s}(t)),
\end{align*}
where $x^{s+\bar{T}}(t)$ satisfies $x^{s+\bar{T}}(s+\bar{T})= x(s+\bar{T})$. The first term in the RHS of the above inequality again goes to zero by assumption (note that the time window is of length $\bar{T}$ and $x^{s+\bar{T}}(t)$ starts at the point $x(s+\bar{T})$). The second term goes to zero because
$$d(x^{s+\bar{T}}(s+\bar{T}),x^{s}(s+\bar{T}))=d(x(s+\bar{T}),x^{s}(s+\bar{T})) \to 0. $$
By the uniqueness and smoothness of the flow, the claim follows. \qed \\

For the rest of the proof we assume that $\bar{T}=T$. Let $t_{n+m}\in[t_{n},t_{n}+T]$, then we have from (\ref{ret}) :
\begin{equation} \label{mantemp}
x_{n+m}=\mathcal{R}_{x_{n+m-1}}\{a_{n+m-1}(H(x_{n+m-1})+M_{n+m})\}.
\end{equation}
The coordinate expression of the above can be written as :
\begin{equation} \label{cordin-expre}
\psi_{n}(x_{n+m}) = \psi_{n}(\mathcal{R}_{x_{n+m-1}}\{a_{n+m-1}(H(x_{n+m-1})+M_{n+m})\}).
\end{equation}
As shown earlier, $x_{n+m}$ belongs to $\mathcal{U}_{{n}}$ without loss of generality, by our choice of $\bar{T}$. A Taylor expansion of the term on the RHS of (\ref{cordin-expre}) gives,
\begin{multline}\label{newmaneq}
 \psi_{n}\big(\mathcal{R}_{x_{n+m-1}}\{a_{n+m-1}(H(x_{n+m-1})+M_{n+m})\} \big)=\\
\begin{aligned}
 \psi_{n}(\mathcal{R}_{x_{n+m-1}}(0_{x_{n+m-1}})) \ + D\psi_{n}(\mathcal{R}_{x_{n+m-1}}(0_{x_{n+m-1}})) \ \times \\
D\mathcal{R}_{x_{n+m-1}} (0_{x_{n+m-1}}) (a_{n+m-1}(H(x_{n+m-1})
+ \ M_{n+m}) \ +\\ \ \text{Rd}(\hat{x}_{n+m-1}).
\end{aligned}
\end{multline}
Here the remainder $\text{Rd}(\hat{x}_{n+m-1})=\mathcal{O}(a_{n+m-1}^{2})$
because the iterates remain in a compact set and $\mathcal{R}(\cdot)$, $H(\cdot)$ are smooth. Thus using
the local rigidity property of a retraction ($DR_{x}(0_{x})=\text{id}_{\mathcal{T}_{x}\mathcal{M}}$) and the fact that $\mathcal{R}_{x}(0_{x})=x$, we can write (\ref{newmaneq}) as
\begin{multline*}
 \psi_{n}(\mathcal{R}_{x_{n+m-1}}\{a_{n+m-1}(H(x_{n+m-1})+M_{n+m})\})= \psi_{n}(x_{n+m-1}) + \\
D\psi_{n}(x_{n+m-1})(a_{n+m-1}(H(x_{n+m-1})+M_{n+m})+ \mathcal{O}(a_{n+m-1}^{2}),
\end{multline*}
which gives
\begin{multline*}
  \psi_{n}(\mathcal{R}_{x_{n+m-1}}\{a_{n+m-1}(H(x_{n+m-1})+M_{n+m})\})= \hat{x}_{n+m-1} \ +\\
  a_{n+m-1}\hat{H}(\hat{x}_{n+m-1})
 +a_{n+m-1}\hat{M}_{n+m}+\mathcal{O}(a_{n+m-1}^{2}),
\end{multline*}
where we used the linearity of the map $D\psi_{n}(\cdot)$ in the last equation. Using this in (\ref{cordin-expre}), we get
\begin{multline*}
\hat{x}_{n+m}=\hat{x}_{n+m-1}+a_{n+m-1}\hat{H}(\hat{x}_{n+m-1})+
a_{n+m-1}\hat{M}_{n+m}+\mathcal{O}(a_{n+m-1}^{2}).
\end{multline*}
Doing a recursion on the above we have
\begin{equation}\label{man-x}
\hat{x}_{n+m}=\hat{x}_{n}+\sum_{k=0}^{m-1}a_{n+k}\hat{H}(\hat{x}_{n+k})+\delta_{n,n+k},
\end{equation}
where
\[
\delta_{n,n+k}=\sum_{k=0}^{m-1}a_{n+k}\big\{\hat{M}_{n+k+1}+\mathcal{O}(a_{n+k})\big\}.
\]
\begin{figure}[H]
\begin{center}
\includegraphics[width=11cm,height=8cm]{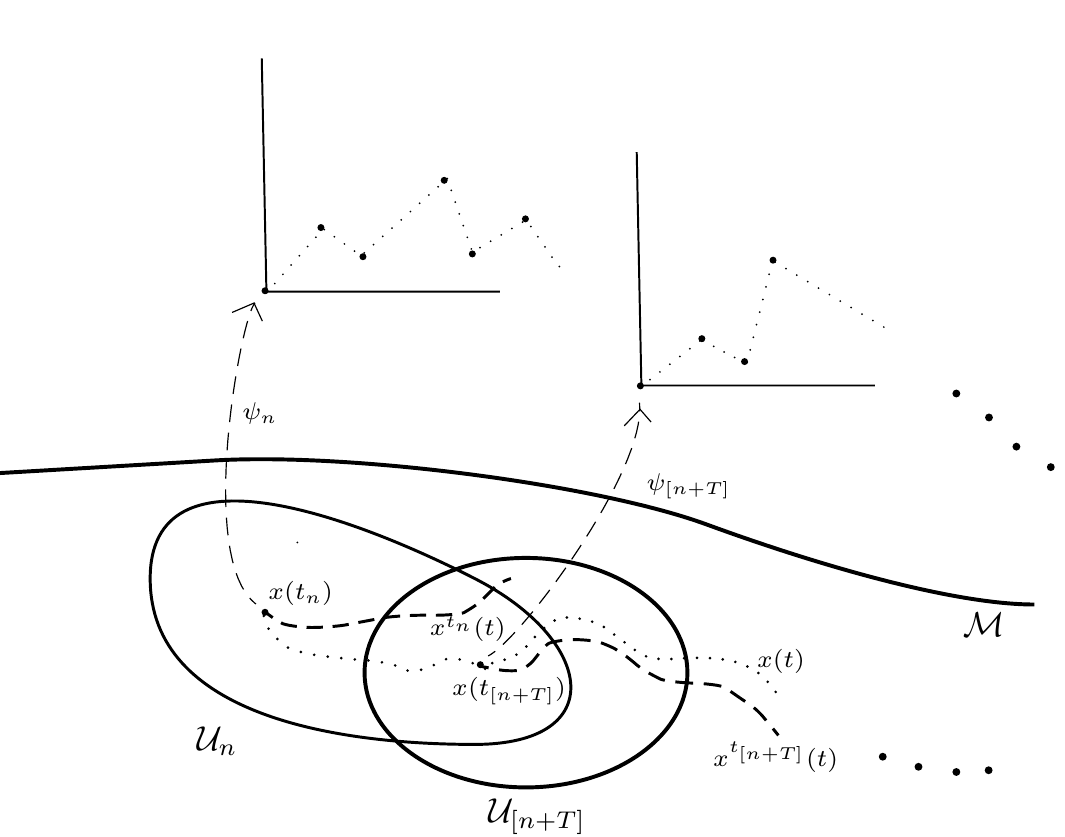}
\end{center}

\caption{This figure depicts an illustration of the local parametrization at $t=t_n$, $t=t_{[n+T]}$} where $[n+T]$ $=\min\{ k : t_k \geq t_n + T \}$. Note that the piecewise linear trajectory corresponds to the iterates $\{\hat{x}_n\}$.

\end{figure}

Note that for $ k \geq 0$, $\hat{M}_{k+1} \in \mathbb{R}^n$ is a martingale since $D\psi(\cdot)$ is a linear operator. Furthermore, we have for any $n \geq 0$,
\begin{align*}
\mathbb{E}[\|D\psi_n(x_n)M_{n+1}\|^{2}|\mathcal{F}_{n}] \leq \|D\psi_n(x_n)\|^2 \mathbb{E}[\|M_{n+1}\|^{2}|\mathcal{F}_{n}],
\end{align*}
where $\|D\psi_n(x_n)\|$ denotes the norm of $D\psi_n(x_n)$. To show that the above quantity is bounded we note that the expectation term in the RHS is bounded by (A3). Also, by (A4) and  the judicious choice of  charts we have made, $\psi_n, D\psi_n$ vary over a (possibly sample path dependent) finite family and therefore are bounded uniformly in $n$ as maps  $\mathcal{U}_{n} \mapsto \mathbb{R}^n$ and $x \in \mathcal{U}_{n} \mapsto$ the space of bounded linear operators $\mathcal{T}_{x}\M \mapsto \mathbb{R}^n$ with operator norm.
So by (A3), (A4), we have $$\sup_n\mathbb{E}[\|D\psi_n(x_n)M_{n+1}\|^{2}|\mathcal{F}_{n}] < \infty.$$
By the martingale convergence theorem (Appendix C, \cite{BorkarBook}), since $\sum_{k\geq0} a_{n+k}^{2}<\infty$, it then follows that $\sum_k a_k \hat{M}_k $ is convergent a.s. This in turn implies that $\sup_{k}\delta_{n,n+k}$ $\to 0$ a.s.\ as $n\to\infty$.

Next we establish similar bounds for the ODE considered in the coordinate expression. Integrating (\ref{newmanODE}) between the limits $t_{n}$ to $t_{n+m}$ we have :
\begin{align}
\hat{x}^{t_{n}}(t_{n+m}) & =\hat{x}^{t_{n}}(t_{n})+\int_{t_{n}}^{t_{n+m}}\hat{H}(\hat{x}^{t_{n}}(t))dt,\nonumber \\
 & =\hat{x}^{t_{n}}(t_{n})+\sum_{k=0}^{m-1}a_{n+k}\hat{H}(\hat{x}^{t_{n}}(t_{n+k})) \nonumber \\
 & +\int_{t_{n}}^{t_{n+m}}\big\{\hat{H}(\hat{x}^{t_{n}}(t))-\hat{H}(\hat{x}^{t_{n}}([t]))\big\} dt, \label{manintbound}
\end{align}
where $[t]=\max\{t_{n}:t_{n}\leq t\}$. We now establish a bound on
the last integral on the right hand side. Since $H(\cdot)$ is smooth we have :
\begin{equation}\label{Hdiff}
\|\hat{H}(\hat{x})-\hat{H}(\hat{y})\|\leq\gamma\|\hat{x}-\hat{y}\|
\end{equation}
for some constant $\gamma$. Using this, we have from integrating (\ref{newmanODE}) between $t_n$ to $t\in [t_n, t_{n}+T)$,
\begin{align*}
\|\hat{x}^{t_{n}}(t)\| & \leq\|\hat{x}^{t_{n}}(t_{n})\|+\int_{t_n}^{t}\|(\hat{H}(\hat{x}^{t_{n}}(t))\|dt,\\
&  \leq \underbrace{\|\hat{x}^{t_{n}}(t_{n})\|+ \| \hat{H}(0)T \|}_{c_T} +\int_{t_n}^{t}\|(\hat{H}(\hat{x}^{t_{n}}(t))-\hat{H}(0)\|dt,  \\
 & \leq c_{T}+\gamma\int_{t_n}^{t}\|\hat{x}^{t_{n}}(t)\|dt, \,\,\, \text{  (from (\ref{Hdiff}))}
\end{align*}
so that by the Gronwall inequality we have
\[
\|\hat{x}^{t_{n}}(t)\|\leq c_{T}e^{\gamma T},\,\,t\in [t_n, t_{n}+T).
\]
Let $C_T \doteq  c_{T}e^{\gamma T} + \| \hat{H}(0)\|$. Integrating (\ref{newmanODE}) between $t_{n+k}$ to $t\in [t_{n+k},t_{n+k+1})$ with $0\leq k \leq m-1$,
\begin{align*}
\|\hat{x}^{t_{n}}(t)-\hat{x}^{t_{n}}(t_{n+k})\| &\leq\|\int_{t_{n+k}}^{t}\hat{H}(\hat{x}^{t_{n}}(s))ds\|,\\
& \leq C_T (t-t_{n+k})\\
& \leq C_T a_{n+k}.
\end{align*}
This gives the required bound :
\begin{align*}
\|\int_{t_{n}}^{t_{n+m}}\big\{\hat{H}(\hat{x}^{t_{n}}(t))-\hat{H}(\hat{x}^{t_{n}}([t]))\big\} dt\| & \leq \int_{t_{n}}^{t_{n+m}}\|\hat{H}(\hat{x}^{t_{n}}(t))-\hat{H}(\hat{x}^{t_{n}}([t]))\|dt,\nonumber \\
 & \leq\gamma\int_{t_{n}}^{t_{n+m}}\|\hat{x}^{t_{n}}(t)-\hat{x}^{t_{n}}([t])\|dt \,\,\, \text{  (from (\ref{Hdiff}))} \nonumber \\
 & \leq\gamma\sum_{k=0}^{m-1}\int_{t_{n+k}}^{t_{n+k+1}}\|\hat{x}^{t_{n}}(t)-\hat{x}^{t_{n}}(t_{n+k})\|dt,\nonumber \\
 & \leq\gamma C_{T}\sum_{k=0}^{m-1}a_{n+k}^{2},\\
\end{align*}
so that
\begin{equation}\label{maninterpol}
\|\int_{t_{n}}^{t_{n+m}}\big\{\hat{H}(\hat{x}^{t_{n}}(t))-\hat{H}(\hat{x}^{t_{n}}([t]))\big\} dt\| \leq\gamma C_{T}\sum_{k=0}^{\infty}a_{n+k}^{2}.
\end{equation}
Subtracting (\ref{man-x}) and (\ref{manintbound}) and taking norms, we get :
\begin{multline*}
\|\hat{x}_{n+m}-\hat{x}^{t_{n}}(t_{n+m})\|\leq\|\hat{x}_{n}-\hat{x}^{t_{n}}(t_{n})\|+\|\delta_{n,n+m}\|\\
\begin{aligned}
&+\|\int_{t_{n}}^{t_{n+m}}\big\{\hat{H}(\hat{x}^{t_{n}}(t))-\hat{H}(\hat{x}^{t_{n}}([t]))\big\} dt\| \\
&+ \sum_{k=0}^{m-1}a_{n+k}\|\hat{H}(\hat{x}_{n+k})-\hat{H}(\hat{x}^{t_{n}}(t_{n+k}))\|.
\end{aligned}
\end{multline*}
Using (\ref{maninterpol}) in the above equation, we get
\begin{multline*}
\|\hat{x}_{n+m}-\hat{x}^{t_{n}}(t_{n+m})\|\leq\|\hat{x}_{n}-\hat{x}^{t_{n}}(t_{n})\|+\sup_{k\geq0}\delta_{n,n+k} +\gamma C_{T}\sum_{k=0}^{\infty}a_{n+k}^{2}\\
\begin{aligned}
& +\sum_{k=0}^{m-1}a_{n+k}\|\hat{H}(\hat{x}_{n+k})-\hat{H}(\hat{x}^{t_{n}}(t_{n+k}))\|.\\
\end{aligned}
\end{multline*}
Set $K_{T,n}\doteq\sup_{k\geq0}\delta_{n,n+k}+\gamma C_{T}\sum_{k=0}^{\infty}a_{n+k}^{2}$. Since $\hat{x}_{n}=\hat{x}^{t_{n}}(t_{n})$ and using (\ref{Hdiff}),
\begin{multline}\label{K-T}
\|\hat{x}_{n+m}-\hat{x}^{t_{n}}(t_{n+m})\|\leq K_{T,n}+ \gamma \sum_{k=0}^{m-1}a_{n+k}\|\hat{x}_{n+k}-\hat{x}^{t_{n}}(t_{n+k})\|.
\end{multline}
Applying the discrete Gronwall inequality (\cite{BorkarBook}, p.\ 146) to the above we get,
\[
\|\hat{x}_{n+m}-\hat{x}^{t_{n}}(t_{n+m})\|\leq K_{T,n}\exp\{\gamma T\}\;\text{a.s.}
\]
Since $K_{T,n}\to0$, the result follows. Note that this proves the theorem only for $t=t_n\uparrow\infty$. However,  we can extend it for general $t\uparrow\infty$  by exploiting the fact that $a_n \to 0$, so that $t_{n+1} -t_n \to 0$.   \qed
\end{proof}

\begin{defn}  (Definition 1.4.1, \cite{Alongi})
A set $A$ is said to be invariant under the flow $\Phi_x(\cdot)$ if $x\in A$ implies that $\Phi_x(t) \in A \, \ \forall t \in \mathbb{R}$.
\end{defn}

\begin{defn} \label{chaintrans}(Definition 2.7.1, \cite{Alongi})
Let $\Phi_x(\cdot)$ be a flow on a metric space $(\M,d(\cdot,\cdot))$. Given $\epsilon>0,\,T>0$, and $x,\,y\in\M$, an $(\epsilon,T)$ chain from $x$ to $y$ with respect to $\Phi_x(t)$ is a pair of finite sequences $x_1=x,x_2,...,x_{n-1},x_n=y$ and $t_1,...,t_{n-1} \in [T,\infty)$ such that
$$ d(\Phi_{x_i}(t_i), x_{i+1} ) < \epsilon$$
for $i=1,..,n-1$. A set $A$ is internally chain transitive for the flow if for any choice of $x, y$ in this set and any $\epsilon, T$ as above, there exists an $(\epsilon, T)$ chain in $A$.
\end{defn}

The following result is a variant of the well known theorem due to Benaim \cite{Benaim0}, a simple exposition of which appears in pp.\ 15-16 of \cite{BorkarBook}, Chapter 2. Taking $\epsilon < \frac{r_0}{2}$ above without loss of generality, an identical argument serves to prove it and is omitted.\\

\begin{thm}\label{manthrm2}
Almost surely, the sequence $\{x_n\}$ generated by (\ref{ret}) converges to a (possibly path dependent) internally chain transitive invariant set of (\ref{manODE}).
\end{thm}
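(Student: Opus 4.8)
The plan is to read Theorem \ref{manthrm} as the assertion that the interpolated trajectory $x(\cdot)$ is an \emph{asymptotic pseudotrajectory} of the flow $\Phi$ of (\ref{manODE}), and then to run the standard Bena\"{i}m argument showing that the $\omega$-limit set of such a trajectory is an internally chain transitive invariant set. Concretely I would set
\[
\Lambda = \bigcap_{s\geq 0}\overline{\{x(t):t\geq s\}},
\]
which by (A4) (relative compactness of the range of $x(\cdot)$, using that the geodesic interpolants between nearby iterates stay in a slightly enlarged compact set) is a nonempty compact subset of $\M$ on which $\{x_n\}=\{x(t_n)\}$ accumulates. It then suffices to prove that $\Lambda$ is invariant under $\Phi$ and internally chain transitive in the sense of Definition \ref{chaintrans}, since this is exactly the meaning of ``$\{x_n\}$ converges to an internally chain transitive invariant set.''

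For invariance I would take $x^*\in\Lambda$ with $x(s_k)\to x^*$ along some $s_k\uparrow\infty$, and fix $t\in\mathbb{R}$. Theorem \ref{manthrm} gives $d(x(s_k+t),\Phi_{x(s_k)}(t))\to 0$, while $\Phi_{\cdot}(t)$ being a diffeomorphism (hence continuous in its spatial argument) gives $\Phi_{x(s_k)}(t)\to\Phi_{x^*}(t)$; combining these via the triangle inequality for $d$ yields $x(s_k+t)\to\Phi_{x^*}(t)$, so $\Phi_{x^*}(t)\in\Lambda$. This handles $t\geq 0$ directly; for $t<0$ I would pass to a convergent subsequence of $x(s_k+t)$ (possible by compactness of $\Lambda$) and use the group property $\Phi_x(t+s)=\Phi_{\Phi_x(t)}(s)$ to conclude backward invariance, giving invariance for all $t\in\mathbb{R}$.

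For internal chain transitivity I would fix $x^*,y^*\in\Lambda$ and $\epsilon,T>0$. Since both points are accumulation points of $x(\cdot)$, the trajectory enters an $\epsilon$-neighborhood of $x^*$ at some large time $\sigma_0$ and an $\epsilon$-neighborhood of $y^*$ at a later time $\sigma_N$; partition $[\sigma_0,\sigma_N]$ into consecutive windows of length in $[T,2T]$ with breakpoints $\sigma_0<\sigma_1<\dots<\sigma_N$, and set $z_i=x(\sigma_i)$. For $\sigma_0$ large enough, Theorem \ref{manthrm} applied on each window gives $d(\Phi_{z_i}(\sigma_{i+1}-\sigma_i),z_{i+1})<\epsilon$ with gap times $\sigma_{i+1}-\sigma_i\geq T$, so $x^*,z_1,\dots,z_{N-1},y^*$ is an $(\epsilon,T)$ chain. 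To place it inside $\Lambda$ I would take $\sigma_0\to\infty$ so that each trajectory point $z_i$ lies within $\epsilon$ of $\Lambda$, replace it by a nearest point of $\Lambda$, and absorb the perturbation into the error budget (working with $\epsilon/3$ bounds), obtaining a genuine $(\epsilon,T)$ chain in $\Lambda$.

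The main obstacle is this chain construction, and it is the only place where the manifold structure must be handled with care. Taking $\epsilon<r_0/2$ as noted after the statement, every $\epsilon$-ball lies inside a single geodesic normal neighborhood (by (A0)), so the Riemannian distance $d(\cdot,\cdot)$ and the local-chart comparisons already used to prove Theorem \ref{manthrm} behave exactly as the Euclidean norm does in Bena\"{i}m's argument; the only geometric facts invoked are the continuity of $\Phi$ and the triangle inequality for $d$, both available globally. With this choice the construction of the chain and its placement in $\Lambda$ proceed verbatim as in the Euclidean treatment (pp.\ 15--16 of \cite{BorkarBook}, after \cite{Benaim0}), which is precisely the ``identical argument'' referred to in the statement.
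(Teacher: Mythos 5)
Your proposal is correct and follows exactly the route the paper intends: the paper omits the proof, stating that with $\epsilon < r_0/2$ the argument of Bena\"{i}m (as in pp.\ 15--16 of \cite{BorkarBook}) applies verbatim, and your write-up is precisely that argument---treating $x(\cdot)$ as an asymptotic pseudotrajectory via Theorem \ref{manthrm}, proving invariance of the limit set through flow continuity and the group property, and building $(\epsilon,T)$ chains from trajectory points with the errors absorbed by an $\epsilon/3$ budget. Your handling of the manifold-specific points (precompactness of the geodesic interpolation under (A0) and (A4), and the role of $\epsilon < r_0/2$) fills in the details the paper leaves implicit, so no gap remains.
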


The above theorem establishes the ODE method for manifolds. If $H(\cdot)$ is derived from the Riemannian gradient of a smooth function $\Psi:\M \to \mathbb{R}$ (see Example 2 for definition), then the behavior of the associated ODE, sometimes called the gradient flow, is easy to characterize.  Specifically, the solution converges to a connected component of the set of critical points of $\Psi$ (Appendix C.12, Proposition 12.1, \cite{Helmke}). Since these are the only internally chain transitive sets of the gradient flow, (\ref{ret}) has the same asymptotic behavior.

We conclude this section with some illustrative examples to justify the usefulness of Theorem \ref{manthrm}.\\

\noindent{\textbf{Example 2}\textit{ (Constrained Regression)} :} Suppose we have a stream of data pairs $\{X_n,Y_n\}_{n \geq 1}$ and a measurement model,
$$ Y_n = f_w(X_n) + \epsilon_n, \, n \geq 1, $$
where $X_n \in \mathbb{R}^m $ is the input, $w \in \mathbb{R}^d$ is a parameter which is to be estimated, $Y_n \in \mathbb{R}^k $ is the measurement or output and $\epsilon_n$ is some measurement noise. The family of functions $\{f_w : \mathbb{R}^m \to \mathbb{R}^k : w \in \mathbb{R}^d \}$ parameterized by $w$ is chosen as a `best fit' so as to minimize a suitable scalar penalty on the error $\epsilon$. The outcome depends upon the error criterion selected and one potential option is to use the mean square error which gives the following optimization problem :
$$\min_w \Big\{\Psi(w) \doteq \mathbb{E}[\Phi(w)] \doteq \frac{1}{2}\mathbb{E}[\| Y_n -f_w(X_n) \|^2 ] \Big\}. $$
Suppose in addition to this, we are constrained to stay inside a submanifold $\mathcal{S}$ of $\mathbb{R}^d$. The constraint can be linear, say
\begin{equation}\label{affine}
Lw=c,
\end{equation}
where $L \in \mathbb{R}^{p \times d} $ and $c \in \mathbb{R}^p$, or  it could be nonlinear, for instance, say the $d-1$ dimensional sphere $S^{d-1}$. Note that for this problem the vector field will be the Riemannian gradient of $\Psi\, : \, \mathcal{S} \to \mathbb{R} $. By the Riemannian gradient, $\text{grad }\Psi(x)$ of $\Psi$ at $x$, we mean a vector that satisfies the following two properties :\\

\noindent (a) Tangency Condition : $$\text{grad }\Psi(x) \in \mathcal{T}_x\mathcal{S},  $$

\noindent (b) Compatibility Condition : $$ D\Psi(x) \cdot \xi  = \langle \text{grad }\Psi(x), \xi \rangle \,\text{ for all } \xi \in  \mathcal{T}_x\mathcal{S}.   $$
Since the expectation cannot be evaluated, the only recourse is to replace the exact gradient by the argument of the expetation evaluated at the current guess $w_n$. For the constraint (\ref{affine}) we have (Section 1.6, \cite{Helmke}),
\begin{align*}
\text{grad }\Phi(w)&=(I_d-L^{\dagger}L)\nabla_w \Phi(w)\\
& =(I_d-L^{\dagger}L)\langle Y_n -f_w(X_n), \nabla _w f_w(X_n)  \rangle,
\end{align*}
where $L^\dagger = L^T(LL^T)^{-1}$ denotes the pseudo-inverse with $L^T$ denoting the transpose and $\nabla_w (\cdot) $ is the Euclidean gradient. Then iteration (\ref{ret})
becomes
\begin{equation}
w_{n+1}=\mathcal{R}_{w_{n}}\big(-a_{n}\text{grad }\Phi(w_n) \big). \label{stochgradconstr}
\end{equation}
For the spherical constraint, we have (Section 1.6, \cite{Helmke}),
\begin{align*}
\text{grad }\Phi(w)& =  \nabla_w \Phi(w) - \langle w,\nabla_w \Phi(w)  \rangle w,
\end{align*}
where $\nabla_w \Phi(w) = \langle Y_n -f_w(X_n), \nabla _w f_w(X_n)  \rangle$ as before.

Assuming interchange of expectation and differentiation, a simple calculation shows that the iteration (\ref{stochgradconstr}) is of the form
$$w_{n+1}=\mathcal{R}_{w_{n}}\big(a_{n}(-\text{grad }\mathbb{E}\left[\Phi(w_n)\right] + M_{n+1}) \big),$$
where $\{M_n\}$ is a martingale difference sequence. So by Theorem \ref{manthrm}, we have the limiting ODE  for either case as
$$ \dot{w} = - \mathbb{E}\,[\text{grad }\Phi(w)] = - \text{grad }\Psi(w).$$
Note that the retraction in either case can just be the projection map.\\

\noindent{\textbf{Example 3}\textit{ (Principal Component Analysis)} :}  This problem entails
the computation of the $r$ principal eigenvectors of an $n\times n$ covariance
matrix $A = \mathbb{E}[z_{k}z_{k}^{T}]$ with
$z_{1},...,z_{k},...$ being a stream of uniformly bounded $n$-dimensional data vectors.
Define the cost function :
\[
C(W)=-\frac{1}{2}\mathbb{E}[z^{T}W^{T}Wz]=-\frac{1}{2}\text{Tr}(W^{T}AW)
\]
with $\text{Tr}(\cdot)$ denoting the trace of matrix and $W\in\mathcal{S}_{n,r}$ which becomes a basis of the dominant $r$-dimensional invariant subspace of $A$ when $C(W)$ is minimized. The constraint space of the above minimization problem is the Grassmann Manifold\footnote{This is because $C(W)$ is invariant to rotations $W\to WO$ for any orthogonal matrix $O$.}, $\mathcal{G}(n,r)$ of $r$-dimensional subspaces in a $n$-dimensional
ambient space, i.e., it is the quotient manifold $$ \mathcal{G}(n,r) = \big\{ \mathcal{S}_{n,r} / O(r) \big\}, $$
where $O(r)$ is set of orthogonal $r \times r$ matrices. The noisy Riemannian gradient of $C(W)$ under the sample $z$
is given by $H(z,W)=(I_n-WW^{T})zz^{T}W$, so that iteration (\ref{ret}) becomes :
\begin{equation}\label{Oja}
W_{k+1}=\mathcal{R}_{W_{k}}\{-a_{k}(I_{n}-W_{k}W_{k}^{T})z_{k}z_{k}^{T}W_{k}\}.
\end{equation}
Note that the ODE vector field here is $H(W)=(I_{n}-WW^{T})AW$. From Theorem \ref{manthrm} the relevant ODE is
\[
\dot{W}=(I_{n}-WW^{T})AW.
\]
This is the celebrated Oja's algorithm \cite{Oja}. To see how the above can be interpreted as an ODE on $\mathcal{G}_{n,r}$, see (\cite{Hairer}, Example 9.1). The ODE equilibrium points correspond to
\begin{align*}
AW&=WW^{T}AW \\
\Rightarrow AW &= WM,
\end{align*}
for $M=W^TAW$, proving that any limit $W$ is an invariant subspace of $A$. A retraction on $\mathcal{S}_{r,n}$ which could be used in (\ref{Oja}) is given by (Example 4.1.3, \cite{Absil}) :
$$ \mathcal{R}_{W}(a H) = \text{qf}(W + aH),$$
where $\text{qf}(\cdot)$ gives the orthogonal  factor in the QR-decomposition of its argument.

\section{Approximate Retractions on Submanifolds}

\subsection{ Sub-manifolds as constraint sets}
In this subsection we consider the case of the constraint being an embedded submanifold of a Euclidean space. The setup is as follows : The constraint space is a submanifold $\mathcal{S}$ of
class $\mathcal{C}^{k}$ ($k\geq2$) of a Euclidean space $\mathcal{E}$ of dimension $n$. A submanifold of class $\mathcal{C}^{k}$ and  dimension $d$ here means that $\mathcal{S}$ is locally
a coordinate slice, that is, for all $x\in\mathcal{S}$, there exits
a neighborhood $\mathcal{U}_{\mathcal{E}}$ of $x$ in $\mathcal{E}$
and a $\mathcal{C}^{k}$ diffeomorphism $\psi$ on $\mathcal{U}_{\mathcal{E}}$
into $\mathbb{R}^{n}$ such that
\[
\mathcal{S}\cap \mathcal{U}_{\mathcal{E}}=\{x\in\mathcal{U}_{\mathcal{E}}\,:\,\psi_{d+1}(x)=\cdot\cdot\cdot=\psi_{n}(x)=0\}.
\]
 As before, assume (A0)-(A4). The only modification now is that the vector field is now defined on the entire embedding space $\E$, i.e. $H\,:\,\E \to \E$ and instead of being smooth, we only require it to be Lipschitz. Also, instead of  using coordinate charts to prove the relevant results, we can directly use the norm $\|\cdot\|$ inherited from the embedding space $\E$. In addition, we assume the following.

(A5) There exists an $f:\mathbb{R}^{n}\to\mathbb{R}^{n}$  such that for $f^n \doteq f\circ f\circ  \cdots \circ f$ ($n$ times), $P(x)\doteq\lim_{n\to\infty}f^{n}(x$)
exists for all $x\in\mathbb{R}^{n}$. We assume the following conditions on $f$ and $P$ :
\begin{enumerate}

\item[(I)] $f$ is continuous,

\item[(II)] $f^{n}\to P$ uniformly on compacts, so that $P$ is continuous.

\item[(III)] $P$ is the Euclidean projection onto the submanifold $\mathcal{S}$, i.e.
\[
\mathbf{}P(y)=\textrm{arg}\min_{x\in \SA }\|y-x\|.
\]
\end{enumerate}
The aim of this subsection is to provide a framework to do projected SA on submanifolds. The idea is to use $f$ to effect a projection to a differentiable submanifold. This will enable us to execute stochastic approximation versions of algorithms on constraint sets such as matrix manifolds. We will provide some examples later in the section on how to choose such a $f$ for certain sub-manifolds. In case such a $f$ does not exist, one can replace $f$ with the exact retraction operation (see Lemma \ref{manproj} below), but the main point of the presented framework is to show that one need not use exact projections while doing projected SA.

We first note an interesting connection between retractions and projections for submanifolds, which  we recall in a lemma borrowed from \cite{Malick}.  For a submanifold $\mathcal{S}$, the projection
is interpreted in the usual way with the metric taken to be the standard
Euclidean metric of the ambient space $\mathcal{E}$. We state
the following result from (\cite{Malick}, Lemma 4) :
\begin{lem}\label{manproj}
Let $\mathcal{S}$ be a submanifold of class $\mathcal{C}^{k}$ around $x\in\mathcal{S}$
and $P_{\mathcal{S}}$ denote the projection onto $\mathcal{S}$.
Then $P_{\mathcal{S}}$ is well defined (always exists and is unique)
locally and the function $P_{\mathcal{S}}$ is of class $\mathcal{C}^{k-1}$
around $x$ with
\[
DP_{\mathcal{S}}(x)=P_{\mathcal{T}_x\mathcal{S}},
\]
where $P_{\mathcal{T}_x\mathcal{S}}$ is the orthogonal projection onto $\mathcal{T}_x\mathcal{S}$, the tangent space at point $x$. Also, the
function $\mathcal{R}:\mathcal{TS}\to\mathcal{S}$ defined by $(x,u)\longmapsto P_{\mathcal{S}}(x+u)$
is a retraction around x. 
\end{lem}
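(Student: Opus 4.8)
The plan is to realize $P_{\mathcal{S}}$ as the composition of the bundle projection with the inverse of the normal ``endpoint'' map, and to read off both the regularity and the differential from this identification. First I would introduce the normal bundle $N\mathcal{S}=\{(p,v):p\in\mathcal{S},\,v\in\mathcal{N}_{\mathcal{S}}(p)\}$, which inherits a $\mathcal{C}^{k-1}$ manifold structure from the $\mathcal{C}^{k}$ atlas of $\mathcal{S}$ (the normal space is spanned by gradients of local defining functions, so differentiating costs one degree of smoothness). On $N\mathcal{S}$ I define the $\mathcal{C}^{k-1}$ endpoint map $E(p,v)=p+v$ and the bundle projection $\pi(p,v)=p$.

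Second, I would differentiate $E$ at a zero-section point $(x,0)$. Using the canonical splitting $\mathcal{T}_{(x,0)}N\mathcal{S}\simeq\mathcal{T}_x\mathcal{S}\oplus\mathcal{N}_{\mathcal{S}}(x)$ together with the orthogonal decomposition $\mathcal{E}=\mathcal{T}_x\mathcal{S}\oplus\mathcal{N}_{\mathcal{S}}(x)$, a curve $(p(t),v(t))$ with $p'(0)=a$, $v(0)=0$, $v'(0)=b\in\mathcal{N}_{\mathcal{S}}(x)$ gives $DE(x,0)(a,b)=a+b$, so $DE(x,0)$ is the identity under these identifications and in particular an isomorphism. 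The inverse function theorem then yields a $\mathcal{C}^{k-1}$ local diffeomorphism from a neighborhood of $(x,0)$ in $N\mathcal{S}$ onto a neighborhood $\mathcal{U}$ of $x$ in $\mathcal{E}$.

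Third, I would verify that on $\mathcal{U}$ this construction genuinely computes the metric projection. For $y\in\mathcal{U}$ write $E^{-1}(y)=(p,v)$, so $y=p+v$ with $y-p\in\mathcal{N}_{\mathcal{S}}(p)$; this is exactly the first-order stationarity condition for $\min_{q\in\mathcal{S}}\|y-q\|^{2}$, and shrinking $\mathcal{U}$ if necessary a standard compactness and second-order argument shows $p$ is the unique minimizer over the relevant local piece of $\mathcal{S}$. This establishes existence, uniqueness, and the identity $P_{\mathcal{S}}=\pi\circ E^{-1}$ on $\mathcal{U}$, whence $P_{\mathcal{S}}$ is of class $\mathcal{C}^{k-1}$.

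Fourth, I would differentiate this identity at $x$. Since $(DE(x,0))^{-1}\xi=(\xi^{\mathsf{T}},\xi^{\mathsf{N}})$ is the orthogonal decomposition of $\xi\in\mathcal{E}$ and $D\pi(x,0)(a,b)=a$, the chain rule gives $DP_{\mathcal{S}}(x)\xi=\xi^{\mathsf{T}}=P_{\mathcal{T}_x\mathcal{S}}\xi$, i.e.\ $DP_{\mathcal{S}}(x)=P_{\mathcal{T}_x\mathcal{S}}$. For the retraction claim with $\mathcal{R}(x,u)=P_{\mathcal{S}}(x+u)$, centering yields $\mathcal{R}_x(0_x)=P_{\mathcal{S}}(x)=x$, and $D\mathcal{R}_x(0_x)u=DP_{\mathcal{S}}(x)u=P_{\mathcal{T}_x\mathcal{S}}u=u$ for $u\in\mathcal{T}_x\mathcal{S}$, so $D\mathcal{R}_x(0_x)=\mathrm{id}_{\mathcal{T}_x\mathcal{S}}$, verifying both retraction axioms (with $\mathcal{C}^{k-1}$ regularity in place of smoothness, which suffices since $k\geq2$). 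The main obstacle I anticipate is in the middle steps: correctly equipping the normal bundle with its $\mathcal{C}^{k-1}$ structure and, above all, justifying that the inverse of the endpoint map recovers the true \emph{metric} projection rather than merely some normal foot-point, which forces the local-uniqueness argument to be made carefully.
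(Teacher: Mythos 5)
This lemma is not proved in the paper at all: it is imported verbatim from \cite{Malick} (Lemma 4), so there is no in-paper argument to compare yours against. Your proposal is nevertheless a correct reconstruction, and it is essentially the standard tubular-neighborhood proof used in that reference: equip the normal bundle $N\mathcal{S}$ with its $\mathcal{C}^{k-1}$ structure, show the endpoint map $E(p,v)=p+v$ is a local $\mathcal{C}^{k-1}$ diffeomorphism near $(x,0)$ via the inverse function theorem (valid since $k-1\geq1$), identify $P_{\mathcal{S}}=\pi\circ E^{-1}$ locally, and read off $DP_{\mathcal{S}}(x)=P_{\mathcal{T}_x\mathcal{S}}$ and the two retraction axioms by the chain rule. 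One simplification worth noting: the ``second-order argument'' you anticipate in your third step is not actually needed. For $y$ sufficiently close to $x$, any minimizer $p^*$ of $\|y-q\|$ over a compact local piece of $\mathcal{S}$ satisfies $\|p^*-x\|\leq\|p^*-y\|+\|y-x\|\leq2\|y-x\|$ and the first-order stationarity condition $y-p^*\in\mathcal{N}_{\mathcal{S}}(p^*)$, so the pair $(p^*,y-p^*)$ lies in the neighborhood of $(x,0)$ on which $E$ is injective; since $E(p^*,y-p^*)=y=E(p,v)$, injectivity forces $(p^*,y-p^*)=(p,v)$. Thus stationarity plus injectivity of $E$ already identifies the unique normal foot point with the metric projection (and rules out boundary minimizers of the compact piece, since those stay a fixed distance from $x$), with no curvature or second-order analysis required.
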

We propose the following iteration to do (projected) stochastic approximation on embedded submanifolds:
\begin{equation}\label{nongos}
x_{k+1} = f(x_k) + a_k(H(x_k) + M_{n+1}).
\end{equation}
Note that in the above iteration, feasibility is satisfied only asymptotically as proved in Lemma 10, \cite{Mathkar}. The analysis  of this algorithm is easy for submanifolds (of class $\mathcal{C}^k,\,k\geq2$) as we can simply take the Taylor expansion of (\ref{nongos}) after projecting it   :
$$  P(x_{k+1}) = P(f(x_k) + a_k(H(x_k) + M_{n+1}) ).$$
Setting $\tilde{x}_k = P(x_k)$ we have,
$$  \tilde{x}_{k+1} = P(f(x_k)) + a_k DP_{f(x_k)}(H(x_k) + M_{n+1}) ) +\mathcal{O}(a^2_k).$$
The rest of the analysis is similar to \cite{Mathkar}, and noting from Lemma \ref{manproj}  that $DP_{\mathcal{S}}(x)=P_{\mathcal{T}_x\mathcal{S}}$ and $f(x)=x$ for any limit point, the limiting behaviour of (\ref{nongos}) is similar to the following projected ODE :
\begin{equation}\label{Fret}
\dot{x} = P_{\mathcal{T}_x\mathcal{S}}(H(x(t))).
\end{equation}
We give some example's that fit this framework :\\

\noindent{\textbf{Example 4}\textit{ (Affine Manifold)} :} We first examine the simple case of an affine manifold, i.e.,
$$\SA = \{x \in \mathbb{R}^n \,:\,Ax=b\},$$
where $A \in \mathbb{R}^{m\times n}$ and $b\in \mathbb{R}^m$. Note that this manifold can be obtained as
$$\SA = \bigcap_{i=1}^m \SA_i, $$
where for $a_i \doteq$ the $i$ th row of $A$,
$$ \SA_i \doteq \{ x \in \mathbb{R}^n \,:\, a_i^T x=b_i \}.    $$
One way to project onto this set is to use alternating projections.  Then if $P^i$ denotes the projection onto $\SA_i$,  a single full iteration of  alternating projections can be used as a substitute for $f$, so that
 $$ f(\cdot) = P^m \circ \cdots \circ P^1(\cdot).$$\\

\noindent{\textbf{Example 5}\textit{ (Manifold of Correlation Matrices)} :} Suppose we have a constraint minimization problem of the following kind :
\begin{align*}
\min_{C \in \mathbb{R}^{n\times n}} & \quad f(C) \\
\text{s.t.} \,\,\,\,\,\,& C=C^T,\, C \succeq 0,\,C_{ii}=1\,(i=1, \cdots, N),
\end{align*}
where $C \succeq 0$ for a symmetric $C$ denotes that $C$ is positive semidefinite. The constraint set is the set of symmetric correlation matrices, i.e., the set of symmetric positive semidefinite matrices with diagonal elements equal to one.
Note that the constraint can be obtained as the intersection of the following two convex sets :
$$ S \doteq \{ C=C^T \in \mathbb{R}^{n \times n} : C \succeq 0 \},  $$
$$U \doteq \{ C=C^T \in \mathbb{R}^{n \times n} : C_{ii} = 1, \, i=1, \cdots, n \}.$$
The projection on both of these sets is well defined (Theorem 3.1, Theorem 3.2, \cite{Higham}). One can use the Boyle-Dykstra-Han algorithm (see Algorithm 3.3, \cite{Higham}) to project onto the above constraint set. As in the previous example, one can use a single full iteration of this algorithm as a substitute for $f$ in (\ref{non}).\\

\noindent{\textbf{Example 6} : Suppose we have  a constraint set of the following form which arises quite often in control theory (see \cite{Lew}) :
$$ \R_r \cap \M_\A,$$
where $\R_r$ is as in Example 1 and
$$ \M_A \doteq \{ X \in \mathbb{R}^{n \times m} \,:\, \A(X) =b\} $$
for a given linear map $\A :\mathbb{R}^{n\times m} \to \mathbb{R}^d$ and vector $b \in \mathbb{R}^d $. Note that the projection on both sets is well defined and the intersection is a manifold since $\SA_r$ and $\M_A$ intersect transversally, i.e.,
$$\T_{\R_r} + \T_{\M_\A}  = \mathbb{R}^{n \times m}.$$
Again, one could adapt the subroutine suggested in the previous examples.

\begin{rem}
One of the pitfalls of this scheme is that the ODE approximation to (\ref{Fret}) requires Frechet differentiability of the projection map (see assumption (B3), \cite{Mathkar}) which, although satisfied for submanifolds of class $\mathcal{C}^k,\,k\geq2$, is not satisfied for constraint sets with nondifferentiable boundaries. This motivates  an alternative which we explore in the next subsection.
\end{rem}

\subsection{General Constraint Sets}
In continuation of the framework presented in the previous subsection we extend it to general constraint sets which may not be differentiable manifolds. The justification for this is that the projection map is frequently not differentiable for constraints in optimization or stochastic approximation, so the results of the previous subsection will not hold. Here we consider more general sets such as polyhedra, intersection of convex sets which have boundaries that may be nonsmooth or nonconvex sets.

We propose the following iteration to do (projected) stochastic approximation :
\begin{equation}\label{non}
x_{k+1} = (1 - \gamma_k)x_k + \gamma_k f(x_k) + a_k (H(x_k) + M_{k+1}),
\end{equation}
where $ a_k = o(\gamma_k)$ and both satisfy Assumption (A2). The rest of the assumptions are the same as in subsection 3.1 with one addition, viz., we require $f$ to be non-expansive, i.e. for any $x,y\in\E$
$$\|f(x)-f(y)\|\leq\|x-y\|.$$

We first provide some examples that fit the above framework.\\

\noindent{\textbf{Example 7} \textit{ (Polyhedral Set)} :}  The most important example of a constraint set that fits this setup is the polyhedral set given by :
$$\mathcal{X} = \bigcap_{i=1}^m \mathcal{X}_i, $$
where
$$ \mathcal{X}_i \doteq \{ x \in \mathbb{R}^n \,:\, a_i^T x \leq b_i \}.    $$  For this constraint set, the Boyle-Dykstra-Han algorithm  produces an iterate which lies in the convex hull of the previous iterate and its projection (Theorem 2.4, \cite{Deut2}), i.e.,
\begin{equation}
f^{i}(x)=(1-\lambda)x +\lambda P^{i} (x),
\end{equation}
where $\lambda \in (0,1)$. We can take $f(\cdot)=f^1\circ \cdots\circ f^m (\cdot)$ as in Example 4. Note that each $f^i$ is non-expansive here.\\

\noindent{\textbf{Example 8} \textit{ (Two time scale projection)} :}  Another approach to tackle the projection which falls under this setup was proposed in \cite{SuhBor}. We mention it briefly here. It involves running the following iteration :
$$x_{n+1} = (1-a_k)x_k + a_k z_k + a_k (H(x_k) +M_{k+1}), $$
where $z_k$ is obtained from a sub-routine running on a faster time scale and satisfies $\|z_k  -P(x_k)\|\to 0 $. For more details on this algorithm see \cite{SuhBor}.\\

Let  $d(x,A)=\inf_{y \in \SA}\|x-y\|$ be the distance of a point $x$ from the constraint set $\SA$ which we assume to be closed. Since we are no longer dealing with smooth constraints, we have to deal with more general notions of tangent and normal spaces. We define the tangent cone and the normal cone for our constraint set $\SA$ as :

\begin{defn} 
For $x \in \SA$, we define by
$$T_{\mathcal{S}}(x) \doteq  \{v \in \mathbb{R}^n \,|\, \lim_{h\to 0^+}\inf \text{d}(x + hv, \SA)/h = 0\},$$
the tangent cone to $\SA$ at $x$ and by
\begin{equation}\label{cone}
N_{\mathcal{S}}(x) \doteq  \{p \in \mathbb{R}^n \,|\, \langle p,v \rangle \leq 0 \, \forall \, v \in   T_{\mathcal{S}}(x) \},
\end{equation}
the normal cone to $\SA$ at $x$.
\end{defn}
$T_{\mathcal{S}}(x)$ is sometimes also called the contingent cone. Also, for later use, we define the following set, 
\begin{equation*}
N_{\text{prox}(\mathcal{S})}(x) \doteq  \{v \in \mathbb{R}^n \,|\, v \in P^{-1}(x) -x \}.
\end{equation*}
This set is referred to as the proximal normal cone. It contains vectors $v \in N_{\text{prox}(\mathcal{S})}(x) $, such that $x \in P(x+ \tau v)$ for some $\tau >0$. We have (Lemma 6.1.1 \cite{Aubin1}),
\begin{equation}\label{proxcone}
  N_{\text{prox}(\mathcal{S})}(x) \subset N_{\mathcal{S}}(x).
\end{equation}
The inclusion can be strict, even for sets defined as smooth inequalities (Section E, Chapter 6, \cite{Rock}). The relevant differential inclusion for our problem is
\begin{equation}
\dot{x}\in H(x(t))-N_{\mathcal{S}}(x(t)), \label{inclusion2}
\end{equation}
\[
x(t)\in\mathcal{S}\, \ \ \forall t\in[0,T],
\]
where $H(\cdot)$ is the vector field under consideration. For convex sets, this inclusion is identical to the well known `` Projected Dynamical System" considered in \cite{Nagurney} :
\begin{equation*}
\dot{x}=\Pi_{T_{\mathcal{S}}(x)}(x,H(x)),
\end{equation*}
where $\Pi_{T_{\mathcal{S}}(x)}(x,H(x))$ is defined to be the following limit for any $x\in\mathcal{S}$ :
\[
\Pi_{T_{\mathcal{S}}(x)}(x,H(x))\doteq\lim_{\delta\to0}\frac{P(x+\delta H(x))-x}{\delta}.
\]
This map gives the projection of $H(x)$ on the tangent cone $T_{\mathcal{S}}(x)$. For arbitrary closed sets, $T_{\mathcal{S}}(x)$ is replaced by its convex hull, $\text{conv}\{T_{\mathcal{S}}(x)\}$. The proof of the fact that the operator $\Pi_{T_{\mathcal{S}}(x)}(\cdot,\cdot)$ is identical to the RHS of (\ref{inclusion2}) is provided in (\cite{Dupuis}, Lemma 4.6) for convex sets and in (\cite{Serea}, Proposition 5) for arbitrary closed sets.

We recall the following notion  from \cite{Benaim} where more general differential
inclusions are considered:

\begin{defn}(\cite{Benaim})
Suppose $F$ is a closed set valued map such that $F(x)$ is a non-empty compact convex
set for each $x$. Then a perturbed solution
$y$ to the differential inclusion
\begin{equation}
\dot{x}\in F(x) \label{DifInc}
\end{equation}
is an absolutely continuous function which satisfies:\\

\noindent i) $\exists$ a locally integrable function $t\to U(t)$ such that
for any $T>0$, $$\lim_{t\to\infty}\sup_{0\leq v\leq T}\big|\int_{t}^{t+v}U(s)ds\big|=0,$$
ii) $\exists$ a function $\delta:[0,\infty)\to[0,\infty)$ with $\delta(t)\to0$ as $t \to \infty$
such that
\[
\dot{y}-U(t)\in F^{\delta(t)}(y),
\]
where $F^{\delta}( \cdot )$ denotes the the following set :
\[
F^{\delta}(x)=\big\{ z\in\mathbb{R}^{n}\,:\,\exists x' \,\textrm{such that}\, \ \|x-x'\|<\delta.  \ \text{d}(z,F(x'))<\delta\big\}
\]
\end{defn}

There is no guarantee that the perturbed solution remains close to a
solution of (\ref{DifInc}),  however, the following assumption  helps in establishing some form of convergence. Let $\Lambda$ denote the equilibrium set of (\ref{inclusion2}), assumed to be non-empty. Then :
\textit{
\[
\Lambda \subset \{x\,:\,H(x)\in N_{\mathcal{S}}(x)\}.
\]
}
Assume:

\noindent (A6) There exists a Lyapunov function for
the set $\Lambda$, i.e.,  a continuously differentiable function $V:\mathbb{R}^{n}\to\mathbb{R}$
such that any solution $x$ to (\ref{inclusion2}) satisfies
\[
V(x(t))\leq V(x(0)) \ \forall \\ t > 0
\]
 and the inequality is strict whenever $x(0)\notin \Lambda$.}\\

(A6) is satisfied, for instance if $H=-\nabla g$ for some continuously differentiable $g$. Then $\Lambda$ represents the KKT points and the function $g$ itself will serve as a Lyapunov function for the set $\Lambda$.  If (A6) does not hold, one can still say the following. The asymptotic behavior is analogous to Theorem \ref{manthrm2}. Specifically, we already know that under reasonable assumptions, an SA scheme converges a.s.\ to an internally chain transitive invariant set of the limiting o.d.e. This behavior extends to differential inclusions as well (see \cite{Benaim} or Chapter 5, \cite{BorkarBook}) and is what we would expect for  this algorithm if we remove (A6). It is important, however, that an `invariant' set should be defined in the weak sense, i.e., any point in it should have at least one trajectory (as opposed to \textit{all} trajectories) of the differential inclusion passing through it, that remains in the set for all time. 

The following result is from (\cite{Benaim}, Prop. 3.27) :
\begin{prop} \label{Perturbed}
Let $y$ be a bounded perturbed solution to (\ref{inclusion2}) and there exist a Lyapunov
function for a set $\Lambda$ with $V(\Lambda)$ having an empty interior. Then
\[
\bigcap_{t\geq0}\overline{y\big([t,\infty)\big)}\subset\Lambda.
\]
\end{prop}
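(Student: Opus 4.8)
The plan is to reduce the statement to a purely dynamical fact about the limiting differential inclusion (\ref{inclusion2}) and then exploit the Lyapunov structure through Conley-type arguments. Write $L := \bigcap_{t\geq0}\overline{y([t,\infty))}$ for the limit set of the bounded perturbed solution $y$. The proof proceeds in two stages: first I would show that $L$ is a nonempty, compact, connected, internally chain transitive invariant set of (\ref{inclusion2}), in the sense of Definition \ref{chaintrans} and the weak invariance discussed just before Proposition \ref{Perturbed}; second I would show that any such set carrying a Lyapunov function must collapse into $\Lambda$, provided $V(\Lambda)$ has empty interior.

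For the first stage I would invoke the theory of perturbed solutions to differential inclusions from \cite{Benaim}. Since $y$ is bounded and the right-hand side of (\ref{inclusion2}), read in the projected-dynamical-system form $\dot x=\Pi_{T_{\mathcal{S}}(x)}(x,H(x))$ shown equivalent to it above (a Marchaud map: upper semicontinuous, with nonempty compact convex values and linear growth), satisfies the standing hypotheses, those results yield that the limit set of a bounded perturbed solution is nonempty, compact, connected, weakly invariant, and internally chain transitive. Connectedness is the feature I use most: since $V$ is continuous, $V(L)$ is a compact subinterval $[m,M]$ of $\mathbb{R}$.

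For the second stage I would argue that $V$ is constant on $L$ and hence that $L\subseteq\Lambda$. Suppose for contradiction that $m<M$. Because $V(\Lambda)$ has empty interior, the open interval $(m,M)$ is not contained in $V(\Lambda)$, so I may pick a value $c\in(m,M)$ with $c\notin V(\Lambda)$; as $\Lambda$ is closed and $L$ bounded, $c$ sits at positive distance from $V(\Lambda)$, so a thin slab $\{\,|V-c|\le\eta\,\}$ meets no point of $\Lambda$. On this slab every solution strictly decreases $V$, so no solution can cross the level $\{V=c\}$ upward; consequently $\{V\le c\}\cap L$ is forward invariant and attracting within the flow restricted to $L$, with a nonempty complementary repeller in $\{V\ge c\}\cap L$ (both nonempty since $m<c<M$ and $V(L)=[m,M]$). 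This is a proper attractor--repeller decomposition of $L$, contradicting the fact that an internally chain transitive set admits no nontrivial attractor. Hence $m=M$ and $V\equiv c_0$ on $L$. Finally, for any $x\in L$ weak invariance supplies a solution through $x$ that stays in $L$, along which $V$ equals the constant $c_0$; since $V$ strictly decreases along every solution issuing from a point outside $\Lambda$, we conclude $x\in\Lambda$, i.e.\ $L\subseteq\Lambda$, which is the assertion.

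The main obstacle is the Conley-theoretic step in the second stage: turning the value $c\notin V(\Lambda)$ into a genuine attractor--repeller pair for a set-valued, weakly invariant flow, and invoking the characterization that internally chain transitive sets support only the trivial decomposition. The care needed is, first, to make rigorous that solutions meeting the slab leave it downward in bounded time and cannot recross the level $c$ upward --- this is exactly where $c\notin V(\Lambda)$ is indispensable, since it rules out trajectories that stall at the level $c$, and it requires a uniform strict-decrease estimate on the compact slab away from $\Lambda$; and second, to use the notions of attractor, basin and chain transitivity appropriate to differential inclusions rather than single-valued flows. All of these tools are available in \cite{Benaim}. The first stage, that limit sets of bounded perturbed solutions are internally chain transitive, is itself the substantial input but may be quoted directly from \emph{ibid.}
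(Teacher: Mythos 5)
Your proposal is correct, but it is worth being clear about what the paper actually does here: it offers no proof of Proposition~\ref{Perturbed} at all, quoting it verbatim from \cite{Benaim} (Prop.~3.27 there). What you have written is, in effect, a reconstruction of the proof in that cited source: your stage one (limit sets of bounded perturbed solutions of a Marchaud inclusion are compact, connected, weakly invariant and internally chain transitive) is Theorem~4.3 of \cite{Benaim}, and your stage two is exactly their Lyapunov argument --- for $c\notin V(\Lambda)$ the sublevel set $L\cap\{V\le c\}$, if proper and nonempty, is an attracting set for the dynamics induced on $L$, contradicting the fact that internally chain transitive sets admit no proper attractors; connectedness of $L$ plus the empty interior of $V(\Lambda)$ then forces $V|_L$ to be constant, and weak invariance together with strict decrease off $\Lambda$ gives $L\subset\Lambda$. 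Two small points deserve tightening. First, to get a slab $\{|V-c|\le\eta\}$ disjoint from $\Lambda$ you need $V(\Lambda)$ closed, i.e.\ $\Lambda$ compact; this follows from (A8) (boundedness of $\mathcal{S}$, hence of $\Lambda\subset\mathcal{S}$, plus closedness of the graph of $N_{\mathcal{S}}$ on a proximal retract), not from boundedness of $L$ as you state. Second, upgrading forward invariance of $L\cap\{V\le c\}$ to attraction requires a uniform exit-time estimate from the slab, which you correctly flag; the clean way to get it is compactness of the solution set of the inclusion (a limit of solutions lingering in the slab for arbitrarily long times would be a solution staying there forever, whose $\omega$-limit would carry a constant value of $V$ off $\Lambda$, contradicting strict decrease). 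With those two repairs your argument is a complete and faithful proof of the statement the paper merely imports.
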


\noindent (A7) The following condition is satisfied
$$\lim_n\sum_{j=n}^{n+m(n)} \gamma_j \|f(x_j)-x_j \| \to 0$$
with, $$m(n) := \min\{k \geq n : \sum_{j=n}^{n+k}a_j \geq T\}, n \geq 0.$$

Before, proceeding further, we discuss assumption (A7) since it seems a bit restrictive. To get some intuition regarding this assumption we first note that (\ref{non}) draws a parallel with the Tsitsiklis scheme \cite{Tsitsiklis}. The fixed points ($f(x)=x$)  in the Tsitsiklis scheme  belong to the consensus subspace which is the set of fixed points of some doubly stochastic matrix (say $Q$), i.e. the set $Qx=x$ (see \cite{Nedic}). Convergence to this set takes place at a linear rate which is a direct consequence of the fact that the product of doubly stochastic matrices converges to its stationary average at a linear rate. This helps in ensuring that the error terms $\|Qx_k-x_k\|$ stay sufficiently small (see Lemma 1, \cite{Mathkar}).  However, we do not have such a condition here, in particular the convergence rate of $f^n (\cdot) \to P (\cdot)$ is not assumed to be linear. But all we need is (A7) and it can be ensured by making sure that the term $\|f(x_k)-x_k\|$ becomes sufficiently small. For instance, for Example 7 this can be done by replacing $f$ with a time dependent $f_n$,
$$f_{n}(\cdot) = \underbrace{f\circ..\circ f}_{i_n \text{  times}}(\cdot), $$
where $i_n$ denotes a sufficient number of Boyle-Dykstra iterations, so that the term $\|f(x_k)-x_k\|$ stays small \cite{SuhBor}.

We have to also make sure that there is at least one solution to (\ref{inclusion2}).  A class of sets for which this holds is (bounded) proximal retracts which include convex sets, sets with continuously differentiable boundaries etc. A proximal retract set is defined as follows :
\begin{defn} \cite{Serea}
 A closed set $\mathcal{S}\subset\mathbb{R}^{n}$
is called a proximal retract if there is a neighborhood $N$ of $\mathcal{S}$
such that the projection $P(\cdot)$ is single valued
in $N$.
\end{defn}

\noindent (A8) The set $\SA$ is a bounded proximal retract.\\

\begin{rem}\label{remark1}
To prove the main result, we shall show that the suitably interpolated iterates generated
by (\ref{non}) form a perturbed solution to the differential inclusion
(\ref{inclusion2}), so that using Proposition \ref{Perturbed} (along with assumption
(A6)), the algorithm is shown to converge to its
equilibrium set.\\

\end{rem}

\begin{thm}\label{pert.}
If (A1)-(A8) hold,  we have
\[
x_n\to\{x\,:\,x\in\Lambda\} \ \mbox{a.s.},
\]
where $\Lambda$ denotes the set :
\[
\Lambda =\{x\in\mathcal{S}\,:\,H(x)\in\mathcal{N}_{\SA}(x)\}.
\]
\end{thm}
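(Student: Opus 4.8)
The plan is to follow the strategy announced in Remark~\ref{remark1}: interpolate the iterates on the slow time scale, show that the resulting trajectory is a bounded \emph{perturbed solution} of the differential inclusion (\ref{inclusion2}), and then invoke Proposition~\ref{Perturbed} together with the Lyapunov function supplied by (A6). Concretely, I would set $t_n=\sum_{m=0}^{n-1}a_m$ and define $\bar{x}(\cdot)$ by $\bar{x}(t_n)=x_n$ with linear interpolation on $[t_n,t_{n+1})$, exactly as in the proof of Theorem~\ref{manthrm}. Rewriting (\ref{non}) as
\[
x_{n+1}-x_n=\gamma_n\big(f(x_n)-x_n\big)+a_n\big(H(x_n)+M_{n+1}\big),
\]
the interpolated velocity on $[t_n,t_{n+1})$ equals $\tfrac{\gamma_n}{a_n}(f(x_n)-x_n)+H(x_n)+M_{n+1}$. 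By (A4) and (A8) the iterates remain in a compact set a.s., so $\bar{x}(\cdot)$ is bounded; boundedness of $\mathcal{S}$ and non-expansiveness of $f$ keep all relevant quantities uniformly controlled.

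Next I would establish asymptotic feasibility, i.e.\ $d(x_n,\mathcal{S})\to0$ a.s. Because $a_n=o(\gamma_n)$, the correction $\gamma_n(f(x_n)-x_n)$ acts on a faster scale than the drift $a_n(H(x_n)+M_{n+1})$ and pulls the iterate toward the fixed-point set of $f$, which by the construction $P=\lim_n f^n$ and (A8) is $\mathcal{S}$; this is the analogue of Lemma~10 of \cite{Mathkar}. Feasibility is what allows me, when identifying base points, to replace $\bar{x}(t)\approx x_n$ by $P(x_n)\in\mathcal{S}$ with vanishing error, an error the inflated set $F^{\delta}(\cdot)$ in the definition of a perturbed solution is designed to absorb.

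The heart of the argument is verifying that $\bar{x}(\cdot)$ is a perturbed solution of (\ref{inclusion2}) with $F(x)=H(x)-N_{\mathcal{S}}(x)$. For the vanishing-average term I set $U(t)=M_{n+1}$ on $[t_n,t_{n+1})$; since (A2)--(A3) give $\sup_n\mathbb{E}[\|M_{n+1}\|^2\mid\mathcal{F}_n]<\infty$ and $\sum_n a_n^2<\infty$, the martingale $\sum_k a_k M_{k+1}$ converges a.s., whence $\lim_{t\to\infty}\sup_{0\le v\le T}\big|\int_t^{t+v}U(s)\,ds\big|=0$, giving condition (i). For condition (ii) I must show $\dot{\bar{x}}(t)-U(t)=\tfrac{\gamma_n}{a_n}(f(x_n)-x_n)+H(x_n)\in F^{\delta(t)}(\bar{x}(t))$ with $\delta(t)\to0$. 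The decisive geometric fact is that $f(x_n)-x_n$ is a proximal-normal direction pointing toward $\mathcal{S}$: up to a vanishing error it is a non-negative multiple of $P(x_n)-x_n\in -N_{\text{prox}(\mathcal{S})}(P(x_n))$, so since the normal cone is a cone, $\tfrac{\gamma_n}{a_n}(f(x_n)-x_n)\in -N_{\text{prox}(\mathcal{S})}(P(x_n))\subset -N_{\mathcal{S}}(P(x_n))$ by (\ref{proxcone}). Adding $H(x_n)$ and absorbing the $d(x_n,\mathcal{S})$-error into $F^{\delta}$ yields the required inclusion. Assumption (A7) enters precisely here: $\sum_{j=n}^{n+m(n)}\gamma_j\|f(x_j)-x_j\|\to0$ controls the total correction displacement over each window of slow-time length $T$, which is what permits a uniform choice $\delta(t)\to0$ and prevents the interpolation error from accumulating across a window.

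With $\bar{x}(\cdot)$ a bounded perturbed solution and $V$ the Lyapunov function of (A6) whose image $V(\Lambda)$ has empty interior, Proposition~\ref{Perturbed} gives $\bigcap_{t\ge0}\overline{\bar{x}([t,\infty))}\subset\Lambda$; since $t_{n+1}-t_n=a_n\to0$, the discrete iterates share the limit set of $\bar{x}(\cdot)$, so $x_n\to\Lambda$ a.s. The hardest step is the third one: rigorously matching the discrete $f$-correction to the continuous normal-cone term, i.e.\ showing that the amplified increment $\tfrac{\gamma_n}{a_n}(f(x_n)-x_n)$ lands in $-N_{\mathcal{S}}$ up to an error $F^{\delta}$ can absorb. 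This is delicate because a single application of $f$ is only an approximate projection, so I must use (A8) to pin down the proximal-normal geometry near $\mathcal{S}$ and lean on (A7) to keep the accumulated correction over a window negligible; existence of at least one solution of (\ref{inclusion2}) through each point, needed to apply Benaim's framework, likewise rests on (A8) via \cite{Serea}.
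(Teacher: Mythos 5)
Your overall architecture (interpolate on the $\{a_n\}$ time scale, show the trajectory is a bounded perturbed solution of (\ref{inclusion2}), then apply Proposition \ref{Perturbed} with (A6)) matches the paper, and your treatment of the martingale term is fine. But there is a genuine gap at what you yourself identify as the heart of the argument. You place the amplified correction $Z_n=\tfrac{\gamma_n}{a_n}(f(x_n)-x_n)$ \emph{inside} the set-valued map, claiming as a ``decisive geometric fact'' that $f(x_n)-x_n$ is, up to vanishing error, a non-negative multiple of $P(x_n)-x_n\in -N_{\text{prox}(\SA)}(P(x_n))$. Nothing in (A5)--(A8) gives this: $f$ is only assumed continuous, non-expansive, with $f^n\to P$ uniformly on compacts. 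For the paper's own motivating examples (a single sweep of alternating or Dykstra projections, Examples 4 and 7), $f(x)-x$ is generically \emph{not} parallel to $P(x)-x$, so the claimed inclusion in the proximal normal cone fails. Moreover, even granting an approximate alignment $f(x_n)-x_n=c_n(P(x_n)-x_n)+e_n$ with $\|e_n\|\to 0$, the amplification $\gamma_n/a_n\to\infty$ destroys the argument twice over: first, the map $F$ must have compact values for Benaim's framework (the paper's definition (\ref{eff}) carries the bound $\|W\|\le K$ precisely for this reason), while $\|Z_n\|=\tfrac{\gamma_n}{a_n}\|f(x_n)-x_n\|$ is not bounded by any assumption; second, the error $\tfrac{\gamma_n}{a_n}e_n$ need not vanish, and condition (ii) of a perturbed solution demands $\dot y(t)-U(t)\in F^{\delta(t)}(y(t))$ \emph{pointwise in $t$} with $\delta(t)\to0$ --- the window-sum condition (A7) cannot supply pointwise control, so it cannot ``absorb'' this term the way you suggest.

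The paper's proof avoids all of this with a different decomposition: it adds and subtracts $W_k=x_k-P(x_k)$, which \emph{is} a genuine proximal normal at $P(x_k)$ by (\ref{proxcone}) with no assumption on the geometry of $f$, and satisfies $\|W_k\|\to0$ once asymptotic feasibility is proved. The inclusion part is then $H(x_k)-W_k\in F^{\eta(t)}(\Theta(t))$, while the entire correction $Z_k$ (together with $W_k$ and $M_{k+1}$) is shunted into the perturbation $U(t)$, where only condition (i) must be checked: $\int_{t_k}^{t_{k+1}}Z_k\,ds=\gamma_k(f(x_k)-x_k)$, so the window integrals are exactly the sums $\sum_j\gamma_j\|f(x_j)-x_j\|$ that (A7) controls --- this is the correct role of (A7), not the one you assign it. Finally, your feasibility step ($d(x_n,\SA)\to0$) is asserted by two-time-scale intuition; the paper proves it by comparing (\ref{non}) with the iteration $\tilde x_{k+1}=(1-\gamma_k)\tilde x_k+\gamma_kf(\tilde x_k)$, whose limiting o.d.e.\ $\dot{\bar x}=f(\bar x)-\bar x$ is analyzed via the Lyapunov function $\tfrac12\|\bar x-x^*\|^2$, non-expansiveness, and LaSalle's invariance principle, identifying the limit set with the fixed points of $f$, i.e.\ with $\SA$. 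You would need to supply both this argument and a replacement for the normal-cone step for your proposal to close.
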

\begin{proof}
The proof presented here uses the
techniques of \cite{Benaim}, \cite{Bianchi}. Write (\ref{non}) as :
\begin{equation}\label{thrmeq}
x_{k+1} = (1 - \gamma_k)x_k + \gamma_k (f(x_k)  + \epsilon_k + \hat{M}_{k+1}),
\end{equation}
where,
$$\epsilon_k = \frac{a_k}{\gamma_k}(H(x_k)) \text{  and   }  \hat{M}_{k+1}=  \frac{a_k}{\gamma_k}M_{k+1}.$$
Note that $\epsilon_k \to 0$ since $H(x)$ is bounded from (A1),(A4) and $a_k=o(\gamma_k)$. Also, $\hat{M}_{k+1}$ is just a scaled martingale. We first prove that $x_n \to \SA$.\\

\noindent \textit{Claim :}
$\lim_k \inf_{z\in \SA}\|x_{k}-z\| = 0\, $\\

\noindent \textit{Proof.}  Let us first consider the following fixed point iteration :
\begin{equation}\label{xtildeeq}
\tilde{x}_{k+1}=(1-\gamma_{k})\tilde{x}_{k}+ \gamma_{k}f(\tilde{x}_{k}).
\end{equation}
By the arguments of \cite{BorkarBook}, Chapter 2, this has the same asymptotic behavior as the o.d.e.
\[
\dot{\bar{x}}=f(\bar{x})-\bar{x}.
\]
Consider the Lyapunov function $V(\bar{x})=\frac{1}{2}\|\bar{x}\|^{2}$. W.l.o.g. assume that $0$ is a fixed point of $f(\cdot)$.\footnote{If this does not hold, we use the Lyapunov candidate $V(\bar{x})=\frac{1}{2}\|\bar{x}-x^*\|^{2}$ for some fixed point $x^*$ of $f$.} 
Then
\begin{align*}
\frac{d}{dt}V(\bar{x}(t)) & =\bar{x}(t){}^{T}(f(\bar{x}(t))-\bar{x}(t))\\
 & =\bar{x}(t){}^{T}f(\bar{x}(t))-\|\bar{x}(t)\|^{2}.
\end{align*}
For any $v\in\mathbb{R}^{n}$, using the non-expansive property of $f$ and the fact that $0$ is a fixed of $f$ leads to
\[
\|f(v)\|\leq\|v\|.
\]
By the Cauchy-Schwartz inequality,
\[
\frac{d}{dt}V(\bar{x}(t))\leq 0.
\]
By Lasalle's invariance principle we have any trajectory $\bar{x}(\cdot)$  converging to the largest invariant set where $\frac{d}{dt}V(\bar{x}(t))=0$,
which is precisely the set of fixed points of $f$. The claim for $\{\tilde{x}_k\}$ now follows by a standard argument as in Lemma 1 and Theorem 2, pp.\ 12-16, \cite{BorkarBook}. To prove the claim, we simply note that (\ref{thrmeq}) and (\ref{xtildeeq}) only differ by an $o(1)$  perturbation and the noise term $\hat{M}_k$, which does not affect the convergence to the fixed ponts of $f$ (see remark, p.\ 17, \cite{BorkarBook}). This implies in particular that any limit point of (\ref{xtildeeq}), say $x^*$, satisfies
$$ f(x^*)=x^*,$$
$$\Rightarrow  P(x^*)=\lim_nf^n(x^*)=x^* .$$

Let $t_{0}=0$ and $t_{k}=\sum_{i=0}^{k-1}a_{i}$ for any $k\geq1$,
so that $t_{k+1}-t_{k}=a_{k}$. Define the interpolated trajectory
$\Theta:[0,\infty)\to\mathbb{R}^{n}$ as :
\[
\Theta(t)=x_{k}+(t-t_{k})\,\frac{x_{k+1}-x_{k}}{t_{k+1}-t_{k}},\ t\in[t_{k},t_{k+1}],\ k\geq1
\]
 By differentiating the above we have
\begin{align}\label{der.}
\frac{d\Theta(t)}{dt}= & \frac{x_{k+1}-x_{k}}{a_{k}}\,\,\,\,\forall\,t\in[t_{k},t_{k+1}],
\end{align}
where we use the right, resp., left derivative at the end points. Thus by setting $Z_{k} \doteq \frac{\gamma_k}{a_k} (f(x_{k})-x_{k})$ and $W_{k}\doteq x_{k}-P(x_{k})$, we get the following from (\ref{non}) and (\ref{der.}) :
\begin{equation}
\frac{d\Theta(t)}{dt} \in H(x_{k})-W_{k}+W_k+Z_{k}+M_{k+1}. \label{theta1}
\end{equation}
We note here that $W_k \in N_{\text{prox}(\mathcal{S})}(x) (P(x_k)) \subset N_\mathcal{S}(P(x_k))$. Let $F(\cdot)$ denote the following set valued map
\begin{equation}
F(x)=\{H(x)-W\,:\, W \in N_\mathcal{S}(x)\,, \ \|W\|\leq K\},\label{eff}
\end{equation}
 where $0<K<\infty$ is a suitable constant. Note that,
$$
F(x_k)=H(x_{k})-W_{k} \subset F^\delta(P(x_k)) \subset F^{\delta +\delta' } (\Theta(t)),
$$
where $\delta = \|x_k -P(x_k) \|$ and $\delta' = \|P(x_k)-\Theta(t) \| $. We have used the following fact in the above - for any set valued
map $\tilde{H}(\cdot)$ we have
\[
\forall\,(x,\hat{x})\ \in\mathbb{R}^{n}\times\mathbb{R}^{n},\;\!\ \tilde{H}(x)\subset \tilde{H}^{\|x-\hat{x}\|}(\hat{x}).
\]
To finish the proof, $\Theta(\cdot)$ is first shown to be a perturbed solution of a projected dynamical system. Let $\eta(t)\doteq\|x_k -P(x_k) \|+\|\Theta(t)-P(x_{k})\|,\,t\in[t_{k},t_{k+1}),k\geq0$.
Also define
\[
U(t)=U_{k}:=Z_{k}+W_{k+1}+M_{k+1},
\]
 for $t\in[t_{k},t_{k+1}),k\geq0$.
 Then we have from (\ref{theta1}),
\begin{equation}
\frac{d\Theta(t)}{dt}-U(t)\in F^{\eta(t)}(\Theta(t)).\label{incl1}
\end{equation}
If we show that $\eta(t)\to0$ and $\sum_{k}a_{k}U_{k}<\infty$,
we are done by Proposition \ref{Perturbed} and Remark
\ref{remark1}. Then, by (\ref{incl1}), $\Theta(\cdot)$ can be interpreted
as a perturbed solution of the differential inclusion
\[
\dot{\Psi}(t)\in F(\Psi(t)).
\]
 Convergence to the set $\Lambda$ then follows as discussed in Remark \ref{remark1}. We first prove that :
\[
\lim_{t\to\infty}\sup_{0\leq v\leq T}\big|\int_{t}^{t+v}U(s)ds\big|=0.
\]
 We have
\[
\int_{t_k}^{t}U(t)=a_{k}\big[\underbrace{M_{k+1}}_{I}+\underbrace{Z_{k}+W_k}_{II}\big]
\]
 for $t=t_{k+1}$. For $T > 0$, let $$m(n) := \min\{k \geq n : \sum_{j=n}^{n+k}a_j \geq T\}, \ n \geq 0.$$\\
I: This term is the error induced by the noise. The process $\sum_{m=0}^{k-1}a_m\times$ $M_{m+1}, k \geq 1,$ is a zero mean square integrable martingale w.r.t.\ the increasing $\sigma$-fields defined in (A3), with   $\sum_{m=0}^{\infty}a_m^2\mathbb{E}\left[\|M_{m+1}\|^2 | \mathcal{F}_m\right] < \infty$ from (A3) and the square-summability of $\{a_m\}$.  It follows from  the martingale
convergence theorem (Appendix C, \cite{BorkarBook}), that this martingale converges a.s. Therefore
\[
\sup_{\ell \leq m(n)}\|\sum_{k=n}^{\ell}a_{k}M_{k+1}\|\to0\,\,\,\text{a.s. }
\]
II: The fact that $a_kZ_k \to 0$ is simply assumption (A7). Note that
\begin{align*}
\sup_{\ell \leq m(n)}\|\sum_{k=n}^{\ell} a_kZ_k\| &= \sup_{\ell \leq m(n)}\|\sum_{k=n}^{\ell} \gamma_k (f(x_k) -x_k)\|,\\
& \to 0.
 \end{align*}
From the claim proved previously we have $\|x_k -P(x_k)\|\to0$, so that 
\begin{align*}
\sup_{\ell \leq m(n)}\|\sum_{k=n}^{\ell} a_k W_k\| &= \sup_{\ell \leq m(n)}\|\sum_{k=n}^{\ell} a_k (x_k-P(x_k) )\|,\\
& \leq  \sup_{\ell \leq m(n)} \|x_{\ell} -P(x_{\ell}) \| \cdot \Big( \sup_{\ell \leq m(n)}\sum_{k=n}^{\ell} a_k \Big) ,\\
& \leq  \sup_{\ell \leq m(n)} \|x_{\ell} -P(x_{\ell}) \|\cdot T\\
& \to 0.
 \end{align*}

All that is left to prove that the theorem holds is that $\eta(t) \to 0$. By the definition of $\Theta(\cdot)$,
for any $t\in[t_{k},t_{k+1}]$ :
\begin{align*}
\|\Theta(t)-P(x_{k})\| & \leq \|x_{k}-P(x_{k})\| \,+\|x_{k+1}-x_k\|\frac{\|t-t_{k}\|}{\|t_{k+1}-t_{k}\|},\\
 & = \|x_{k}-P(x_{k})\|+  \mathcal{O}(\gamma_{k}), \\
 & \to 0,
\end{align*}
using the claim proved above. Thus $\eta(t) =  \|x_{k}-P(x_{k})\|+ \|\Theta(t)-P(x_{k})\| \to 0 $.   This completes the proof.     \qed
\end{proof}

\section{Conclusions}

To conclude we note some important connections with related works in order to suggest some possible extensions and future research directions.

\begin{enumerate}[label=(\roman*)]

\item \textit{Newton like schemes for stochastic optimization } : Second order methods have attracted a lot of attention recently for stochastic optimization, particularly in the field of machine learning. The same trend has been observed in the field of Riemannian optimization, most notably in \cite{Qi}, \cite{Ring} and \cite{Wen}  where quasi-Newton methods are studied. The same level of attention as the first order methods, however, has not been devoted to second order stochastic optimization methods  and one would like to see how these algorithms fare here.\\

\item \textit{ODE computational methods } : The Euclidean Robbins-Monroe scheme can be thought of as a noisy discretization of the limiting ODE. In fact the SA scheme differs from the standard Euler scheme only in using a decaying time step and the presence of noise. The computational methods for solving ODEs on manifolds offer a lot more diversity (see Chapter 4, \cite{Hairer}). In fact the retraction scheme itself seems to draw a parallel with the projection methods used to solve ODEs on manifolds. One could perhaps adapt the various techniques there to the present case and study the properties of the resultant algorithms.   \\

\end{enumerate}


\begin{thebibliography}{99}

\bibitem{Absil} P.\ A.\ Absil, R.\ Mahoney and R.\ Sepulchre, ``\textit{Optimization Algorithms On Matrix Manifolds}'',
Princeton Uni.\ Press, Princeton (2008)

\bibitem{Malick} P.\ A.\ Absil and J.\ Malick, \textit{``Projection like retractions on Matrix manifolds}'',
SIAM Journal on Control and Optimization 22(1), 135-158 (2012)


\bibitem{Alongi} J.\ M.\ Alongi and G.\ S.\ Nelson, \textit{`` Recurrence and Topology }'',
 American Mathematical Soc., Providence, RI (2007)


\bibitem{Arnold} V.\ I.\ Arnold, ``\textit{Ordinary Differential Equations}'', Springer Verlag, Berlin (2001)



\bibitem{Aubin} J.\ P.\ Aubin and A.\ Cellina, ``\textit{Differential Inclusions:
Set-Valued Maps and Viability Theory}''  Springer Verlag, New York (1984)

\bibitem{Aubin1} J.\ P.\ Aubin, ``\textit{Mutational and Morphological Analysis : Tools for Shape Evolution and Morphogenesis}'', Systems and Control : Foundations and Applications, Birkhauser Boston Birkhauser Boston
(1999)


\bibitem{Benaim0} M.\ Benaim, ``A dynamical system approach to stochastic approximation", \textit{SIAM Journal on Control and Optimization} 34(2), 437-472.

\bibitem{Benaim} M.\ Benaim, J.\ Hofbauer, and S.\ Sorin, ``\textit{Stochastic
approximations and differential inclusions }'', SIAM Journal on Control and Optimization 44(1), 328-348 (2005)



\bibitem{Bertsekas} D.\ P.\ Bertsekas and J.\ N.\ Tsitsiklis, ``\textit{Parallel and Distributed Computation}'', Prentice Hall, Englewood Cliffs, NJ
(1989)

\bibitem{Bianchi} P.\ Bianchi and J.\ Jakubowicz, ``\textit{Convergence of
a multi-agent projected stochastic gradient algorithm for non-convex
optimization}'', IEEE Transactions on Automatic Control 58(2), 391-405 (2013)


\bibitem{Bonn} S.\ Bonnabel, ``\textit{Stochastic gradient descent on
Riemannian manifolds}'',
IEEE Transactions on Automatic Control 58(2), 391-405 (2013)





\bibitem{BorkarBook} V.\ S.\ Borkar, \textit{“Stochastic Approximation: A Dynamical Systems Viewpoint }”,
Hindustan Book Agency, New Delhi, and Cambridge Uni.\ Press, Cambridge,
UK (2008)


\bibitem{twotime} V.\ S.\ Borkar, ``\textit{Stochastic approximation with
two time scales }”, Systems and Control Letters 29(5), 291-294 (1997)



\bibitem{Bot} L.\ Bottou, ``\textit{Online learning and stochastic approximations}'', in `\textit{On-line Learning in Neural Networks}' (D.\ Saad, ed.) 17(9), 142 (1998)


\bibitem{dcarmo} M.\ P.\ do Carmo, ``\textit{Riemannian Geometry}'', Birkhauser Boston Inc., Boston, MA (1992)




\bibitem{Deut} F.\ Deutsch, ``\textit{Best Approximation in Inner Product Spaces}'', Springer Verlag, New York (2001)


\bibitem{Deut2} F.\ Deutsch and H.\ Hundal, ``\textit{The rate of convergence of Dykstra's cyclic projections algorithm: the polyhedral case}'', Numerical Functional Analysis and Optimization 15(5), 537-565, (1994)



\bibitem{Dupuis} P.\ Dupuis, ``\textit{Large deviations analysis of
reflected diffusions and constrained stochastic approximation algorithms
in convex sets}'', Stochastics 21(1), 63-96 (1987)

\bibitem{Nagurney} P.\ Dupuis and A.\ Nagurney, \textit{``Dynamical systems
and variational inequalities}'', Ann.\ Op.\ Research 44(1), 7-42 (1993)


\bibitem{Hairer} E.\ Hairer, C.\ Lubich and G.\ Wanner, \textit{``Geometric Numerical Integration}'',
Springer Verlag, Berlin-Heidelberg (2002)


\bibitem{Helmke} U.\ Helmke  and J.\ B.\ Moore,   ``\textit{Optimization and Dynamical Systems}", Springer Verlag, Berlin-Heidelberg (2012)


\bibitem{Higham} N. Higham  ,   ``\textit{Computing the nearest correlation matrix - a problem from
finance}", IMA J. Numer. Anal. 22(3), 329-343 (2012)

\bibitem{Kushner} H.\ J.\ Kushner and D.\ S.\ Clark, ``\textit{Stochastic
Approximation Methods for Constrained and Unconstrained Systems}'',
Springer, New York (1978)


\bibitem{Lee} J.\ M.\ Lee, ``\textit{Riemannian Manifolds: An Introduction to Curvature}'',  Springer Science and Business Media, New York (2006).


\bibitem{Lew} A.\ S.\ Lewis, and J.\ Malick., \textit{``Alternating projections on manifolds }''
 Mathematics of Operations Research 33(1), 216-234 (2008).



\bibitem{Ljung} L.\ Ljung, ``\textit{Analysis of recursive stochastic algorithms}'', IEEE Trans.\  Automatic Control 22(4), 551-575 (1977)


\bibitem{Mathkar} A.\ S.\ Mathkar and V.\ S.\ Borkar, \textit{``Nonlinear
gossip}'', SIAM Journal on Control and Optimization 54(3), 1535-1557 (2016)


\bibitem{Nedic} A. Nedic, ``\textit{Convergence rate of distributed averaging dynamics and optimization in networks}'', Foundations and Trends in Systems and Control 2:1 (2015)


\bibitem{Oja} E.\ Oja, ``\textit{Subspace methods of pattern recognition}'', Research Studies Press, Hertfordshire, UK (1983)




\bibitem{Qi}  C.\ Qi, K. A. Gallivan and P.\ A.\ Absil, ``\textit{Riemannian BFGS algorithm with applications}'', in `\textit{Recent Advances in Optimization and Its Applications in Engineering} (M.\ Diehl, F.\ Glineur, E.\ Jarlebring, W.\ Michiels), Springer Verlag, Berlin-Heidelberg, 183-192 (2010)

\bibitem{Ring} W.\ Ring  and B.\ Wirth, \textit{``Optimization methods on Riemannian manifolds and their application to shape space}'',
SIAM Journal on Control and Optimization 22(2), 596-627 (2012)

\bibitem{Robbins} H. Robbins and S. Monro,  \textit{``A stochastic approximation method}'', The annals of mathematical statistics, 400-407 (1951)

\bibitem{Rock} R. T. Rockafellar and R. J. B. Wets,   ``\textit{
Variational Analysis}", Springer-Verlag Berlin Heidelberg (1998)


\bibitem{Serea} O.\ S.\ Serea, ``\textit{On reflecting boundary problem for optimal control}'', SIAM Journal on Control and Optimization 42(2), 559-575 (2003)



\bibitem{SuhBor} S.\ M.\ Shah and V.\ S.\ Borkar,  ``\textit{ Distributed stochastic approximation with local constraints}'', submitted (2017)



\bibitem{Tren} N.\ T.\ Trendafilov. ``\textit{A continuous-time approach to the oblique Procrustes problem}'', Behaviormetrika 26(2), 167-181 (1999)


\bibitem{Tsitsiklis} J.\ N.\ Tsitsiklis, D.\ P.\ Bertsekas and M. Athans, ``\textit{Distributed asynchronous deterministic and stochastic gradient optimization algorithms}'', IEEE Trans.\  Automatic Control 31(9), 803-812 (1986)



\bibitem{Wen} J.\ Wen, K. A. Gallivan and  P.\ A.\ Absil, ``\textit{A Broyden class of quasi-Newton methods for Riemannian optimization}'', SIAM Journal on Optimization 25(3), 1660-1685 (2015)



\end{thebibliography}
\end{document}